\documentclass[11pt]{article}
\usepackage[utf8]{inputenc}
\usepackage[T1]{fontenc}
\usepackage[english]{babel}
\usepackage{amsmath,amssymb,amsthm,mathtools,mathrsfs,enumitem,fullpage,hyperref}
\usepackage{xcolor}
\usepackage{csquotes}
\usepackage{bbm}
\usepackage[backend=biber, style=numeric, sorting=nty]{biblatex}
\usepackage{authblk}

\numberwithin{equation}{section}

\newtheorem{theorem}{Theorem}[section]
\newtheorem{proposition}[theorem]{Proposition}
\newtheorem{lemma}[theorem]{Lemma}
\newtheorem{corollary}[theorem]{Corollary}

\theoremstyle{remark}
\newtheorem{remark}[theorem]{Remark}

\newtheorem{example}[theorem]{Example}

\theoremstyle{definition}
\newtheorem{definition}[theorem]{Definition}

\begin{document}

\title{Duality for a Martingale Transport Problem with Moment Constraints}

\author[1,3]{A.\,V. Kolesnikov}
\author[2,3]{A.\,V. Novikova}
\author[2,3]{K.\,O. Sokolov}

\affil[1]{HSE University}
\affil[2]{Lomonosov Moscow State University}
\affil[3]{Vega Institute Foundation}

\date{}

\maketitle

\begin{abstract}
We consider a weak martingale optimal transport problem related to the martingale analogue of the Benamou--Brenier formula. In contrast to the classical setting, the second marginal is not given, but specified only through a finite number of moment constraints of a particular form. For this problem, we derive a finite-dimensional dual formulation and prove the absence of a duality gap under the specified interiority condition on the constraint vector. In addition, we describe the structure of the optimal martingale coupling and its relation to Bass martingales.
\end{abstract}

\section{Introduction}

The optimal transport problem dates back to the work of G.~Monge~\cite{Monge1781}, where the problem of transporting mass at minimal cost was first formulated. The modern relaxed formulation of this problem was introduced by L.~V.~Kantorovich~\cite{Kantorovich1942,Kantorovich1958} and laid the foundations of optimal transport theory. By now, this theory has become one of the central tools of variational analysis, probability theory, and mathematical physics; see \cite{Villani2009}. In the case of quadratic cost, a notably important role is played by the dynamic Benamou--Brenier formula~\cite{BenamouBrenier2000}, which relates the $W_2$ distance to the minimization of kinetic energy over the curves of probability measures. Together with McCann's interpolation~\cite{McCann1997}, this formula provides one of the fundamental dynamic interpretations of optimal transport.

In recent years, martingale analogues of optimal transport problems have been studied extensively. The martingale constraint arises naturally, in particular, in model-independent pricing and hedging of financial derivatives; see \cite{BeiglbockHenryLaborderePenkner2013}. The corresponding mathematical theory of martingale optimal transport (MOT) was subsequently developed in~\cite{BeiglbockJuillet2016,BeiglbockNutzTouzi2017}. Martingale analogues of the Benamou--Brenier formulation (MBB) were introduced and studied in~\cite{BackhoffVeraguasBeiglbockHuesmannKallblad2020,HuesmannTrevisan2019}.

Let us briefly recall MBB formulation. Given a Brownian motion $(B_t)_{t \geq 0}$ and a predictable process $(\sigma_t)_{t \geq 0}$, define the martingale
\[
M_t:=M_0+\int_0^t\sigma_s\,dB_s,
\qquad
M_0\sim\mu,
\qquad
M_1\sim\nu .
\]
For fixed marginals $\mu$ and $\nu$, one considers the minimization problem
\[
\min_{\sigma_t, \,t \in [0, 1]} 
\mathbb{E}\int_0^1(1-\sigma_s)^2\,ds.
\]
It was shown in~\cite{BackhoffVeraguasBeiglbockHuesmannKallblad2020} that this dynamic problem admits a static formulation in terms of weak martingale optimal transport. Suppose $\gamma$ to be the standard Gaussian distribution on $\mathbb{R}$ and $\Pi_{\mathcal{M}}(\mu, \nu) = \{\pi \in \Pi(\mu, \nu): \; \int y \, \pi^x(dy) = x \ \mu \text{-a.s.}\}$ to be a set of martingale couplings between $\mu$ and $\nu$, then the corresponding static problem takes the form
\begin{equation}\label{eq:wmot}
\min_{\pi \in \Pi_\mathcal M(\mu, \nu)} 
\int W_2^2(\pi^x, \gamma)\,\mu(dx),
\end{equation}
which, up to an additive constant, is equivalent to
\[
\max_{\pi \in \Pi_\mathcal M(\mu, \nu)} 
\int \operatorname{MC}(\pi^x,\gamma)\,\mu(dx),
\]
where $\operatorname{MC}(\rho,\gamma):= \sup_{q\in\Pi(\rho,\gamma)}\int yz\,q(dy,dz)$. Further development of the duality theory for this problem and its connection with Bass martingales~\cite{Bass1983} was obtained in~\cite{BVBST,Tschiderer}.

In this paper, we consider a modification of problem~\eqref{eq:wmot} in which reference measure $\gamma$ is an absolutely continuous probability measure with a finite second moment (see Section~\ref{sec:problem_statement}), while the second marginal $\nu$ is no longer fully specified. Instead, only finitely many moments of a particular form are prescribed, for example, call option prices:
\[
\int (y-K_i)_+\,\nu(dy)=c_i,
\qquad
i=1,\ldots,N,
\]
where $K_1<\ldots<K_N$ are given strike prices. In terms of a martingale coupling, it means that we consider measures on $\mathbb{R}^2$
\[
\pi(dx,dy)=\pi^x(dy)\mu(dx),
\]
satisfying the martingale condition
\[
\int y\,\pi^x(dy)=x
\qquad
\mu\text{-a.s.}
\]
and a finite collection of constraints on the ``call moments'' $\overline C(\pi)=c \in \mathbb R^N$, where
$$
 \overline C(\pi):=
\left(
\int (y-K_1)_+\,\pi(dx,dy),
\ldots,
\int (y-K_N)_+\,\pi(dx,dy)
\right).
$$
This setup is natural from the viewpoint of applications: in financial markets, the terminal distribution of an asset price is typically unknown, whereas only a finite collection of option prices is observed. Thus, requiring the entire second marginal $\nu$ to be fixed is generally excessive. It is more natural to construct a martingale model calibrated only to the available option prices, especially in illiquid markets where only a small number of quotes are observed.

Let $\mathcal M_2(\mu)$ denote the class of square-integrable martingale couplings with first marginal $\mu$. For a given vector $c\in\mathbb R^N$, let $V(c)$ denote the optimal value of the following problem:
\begin{equation}\label{eq:main-problem}
    \sup_{\substack{\pi\in\mathcal M_2(\mu),\\ \overline C(\pi)=c}} 
    \left \{\int \operatorname{MC}(\pi^x,\gamma)\,\mu(dx) -\frac12\int y^2\,\pi(dx,dy) \right \}.
\end{equation}

Problems in which the terminal distribution is only partially specified have also been considered previously. In~\cite{GuoLoeperWang2022}, a finite collection of European option prices is imposed as a set of constraints in a semimartingale optimal transport problem with an integral functional of the local process  characteristics. In~\cite{AcciaioGarciaFloresMariniPammer2026}, moment constraints are introduced for the classical problem of minimizing the Wasserstein distance in $L^2$. In contrast to these settings, the subject of the present paper is the weak martingale optimal transport problem~\eqref{eq:main-problem} with a finite collection of constraints on the second marginal.

The main result of the paper is that problem~\eqref{eq:main-problem} with finitely many moment constraints admits a finite-dimensional dual formulation. More precisely, for every $\lambda\in\mathbb R^N$ we construct a convex function $\Psi_\lambda$ such that, for every ``admissible'' $c$ as defined in Section~\ref{sec:problem_statement},
$$
V(c)\leq
\inf_{\lambda\in\mathbb R^N}
\left\{
\lambda\cdot c-\int \Psi_\lambda^*(x)\,\mu(dx)
\right\},
$$
where $\Psi_\lambda^*$ denotes the Legendre transform of $\Psi_\lambda$. If, moreover, $c$ is an interior point of the set of admissible moment vectors, then there is no duality gap:
\begin{equation}\label{eq:intro-strong-dual}
V(c)=
\inf_{\lambda\in\mathbb R^N}
\left\{
\lambda\cdot c-\int \Psi_\lambda^*(x)\,\mu(dx)
\right\}.
\end{equation}

It is worth emphasizing the distinction between this result and the known duality theory for weak martingale optimal transport with a prescribed second marginal~\cite{BVBST}. In the classical setting, the dual variable is a function of the terminal state and corresponds to the constraint specified by the entire measure $\nu$. Here, by contrast, the constraints are finite-dimensional, and the dual problem therefore reduces to minimization over a vector $\lambda\in\mathbb{R}^N$. Moreover, due to the incomplete information about the second marginal, the main formula~\eqref{eq:intro-strong-dual} does not follow from the classical martingale Benamou--Brenier theorem: the optimal second marginal arises as part of the solution to the variational problem.

In addition to duality itself, we establish several further results. First, we provide a criterion for determining whether a given vector of call option prices $c$ satisfies a condition guaranteeing the absence of a duality gap. Second, we describe the structure of the optimal coupling for \eqref{eq:main-problem}: its conditional transition kernels have a piecewise-shift structure and may have atoms only at the strike points $K_1,\ldots,K_N$; see Corollary~\ref{cor:plan-structure}. Finally, we show that the optimal coupling admits a representation in terms of a Bass martingale and that the corresponding pair of marginals satisfies the irreducibility condition. Thus, the finite-dimensional calibration problem not only preserves the structure known for the MBB problem with full information about the second marginal, but also provides an alternative mechanism by which this structure arises: the terminal distribution is not prescribed in advance but instead emerges as part of the solution to the variational problem.

The paper is organized as follows. In Section~\ref{sec:problem_statement}, we explain how the static formulation~\eqref{eq:main-problem} arises from a weak martingale optimal transport problem with quadratic cost. We also introduce the auxiliary functions contained in the dual formula~\eqref{eq:intro-strong-dual}, define the set of admissible moment vectors, and establish a criterion for a vector $c \in \mathbb R^N$ to belong to its interior. In Section~\ref{sec:dual-theorem}, we prove the finite-dimensional duality theorem and show there is no duality gap under the interiority condition. As a corollary, we describe the structure of the optimal coupling reconstructed from a solution of the dual problem. Finally, in Section~\ref{sec:mbb}, we establish the connection between the resulting solution, Bass martingales, and the MBB problem with a given second marginal under the original assumption $\gamma = \mathcal N(0,1)$.

\section{Problem statement and preliminaries}\label{sec:problem_statement}

Let $\mu \in \mathcal P_2(\mathbb R)$, and let $\gamma \in \mathcal P_2(\mathbb R)$ be absolutely continuous  with respect to Lebesgue measure. Recall that $\mathcal M_2(\mu)$ denotes the class of all probability measures $\pi(dx,dy)=\pi^x(dy)\mu(dx)$ on $\mathbb R^2$ satisfying
\[
\int y\,\pi^x(dy)=x\quad\mu\text{-a.s.}, \qquad \int y^2\,\pi(dx,dy)<\infty.
\]
Throughout the paper we use the existence of regular conditional distributions and measurable selection theorems in the form given in \cite[Chapter~10]{Bogachev2007}.

Let $\pi \in \mathcal M_2(\mu)$ and let $K_1<\dots<K_N \in \mathbb{R}$ be strike prices of european call-options. The components of the vector
\[
\overline C_K(\pi) = \overline C(\pi) = \left( \int (y - K_1)_+\,\pi(dx,dy),\dots, \int (y - K_N)_+\,\pi(dx,dy) \right)\in\mathbb R^N
\]
will be referred to as \textit{call moments} of the coupling $\pi$. Since the strike vector $K = (K_1, \ldots, K_N)$ is usually fixed throughout, we mostly omit the subscript. A set of \textit{admissible} call-moment vectors is defined by
\[
\mathcal C:= \{\overline C(\pi):\pi \in \mathcal M_2(\mu)\} \subset\mathbb R^N.
\]
Thus, if $c\in\mathcal C$, then there exists a coupling $\pi\in\mathcal M_2(\mu)$ such that $c=\overline C(\pi)$.

\begin{remark}\label{rem:alternative-statement}
Another natural formulation, in place of \eqref{eq:main-problem}, is to consider
\begin{equation} \label{eq:main-alternative} 
\sup_{\substack{\pi\in\mathcal M_2(\mu),\\ \overline C(\pi)=c}} \left \{\int \operatorname{MC}(\pi^x,\gamma)\,\mu(dx)\right \}.
\end{equation}
In this formulation, the dual problem has a less complicated form. More precisely, one can show that the value of \eqref{eq:main-alternative} coincides with
\[
\inf_{\substack{\lambda\in\mathbb R^N,\\ \lambda_i\geq 0,\ i=1,\ldots,N}} \left \{\lambda \cdot c - \int \left (g^* * \gamma \right )^*(x)\, \mu(dx) \right \}
\]  
where 
\[
g_\lambda(y)=\sum_{i=1}^N\lambda_i(y-K_i)_+, \qquad (\varphi * \gamma) (t) = \int \varphi(t+z)\, \gamma(dz).
\]
Although this formulation admits a simpler description (in particular, explicit formulas can be obtained, unlike for problem \eqref{eq:main-problem}), it has a substantial drawback: a solution to \eqref{eq:main-alternative} may not exist. One can show that, for $N=1$ and $\gamma$ being uniform on $[0,1]$, the primal problem has no optimizer. The conditional measures along which convergence to the optimum is observed are of the form
\[
\pi^x_n = \frac{n-x}{n-K} \delta_K + \frac{x-K}{n-K} \delta_n, \quad \text{if } x \geq K,
\]
and
\[
\pi^x_n = \frac{n+x}{n+K} \delta_K + \frac{K-x}{n+K} \delta_{-n}, \quad \text{if } x < K.
\]
\end{remark}

To streamline formulas, for a probability measure $\rho \in \mathcal P_2(\mathbb R)$, denote its call price function by
\[
C_\rho(K):=\int (x-K)_+\,\rho(dx),
\qquad K\in\mathbb R.
\]
The following result provides a simple description of the interior of the set $\mathcal C$, which will be needed further to verify the condition of the duality theorem.

\begin{proposition}\label{prop:c-check}
Let $c=(c_1,\dots,c_N) \in \mathbb R^N$. Then $c\in\operatorname{int}\mathcal C$
if and only if the following conditions hold:
\begin{enumerate}
    \item $c_i>C_\mu(K_i)$ for all $i=1,\dots,N$;
    \item if $N\geq2$, then
    \[
    -1<s_1<\cdots<s_{N-1}<0,
    \qquad \text{where }\;
    s_i:=\frac{c_{i+1}-c_i}{K_{i+1}-K_i},
    \quad i=1,\dots,N-1.
    \]
\end{enumerate}
\end{proposition}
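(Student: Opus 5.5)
The plan is to reduce everything to the classical description of call price functions of measures in convex order. By Strassen's theorem, a coupling $\pi\in\mathcal M_2(\mu)$ with second marginal $\nu$ exists if and only if $\nu\in\mathcal P_2(\mathbb R)$ and $\mu\preceq_{cx}\nu$. Hence
\[
\mathcal C=\{(C_\nu(K_1),\dots,C_\nu(K_N)) : \nu\in\mathcal P_2(\mathbb R),\ \mu\preceq_{cx}\nu\}.
\]
Recall the standard characterization of these functions. A function $f$ is of the form $C_\nu$ with $\nu\succeq_{cx}\mu$ if and only if three conditions hold:
\begin{enumerate}
\item $f$ is convex with $f\geq C_\mu$;
\item the slopes of $f$ lie in $[-1,0]$;
\item $f(k)\to 0$ as $k\to+\infty$ and $f(k)-(m-k)\to0$ as $k\to-\infty$, where $m=\int x\,\mu(dx)$.
\end{enumerate}
The measure $\nu$ is then recovered as $f''$. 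I will use this in both directions.

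\textbf{Necessity.} First observe that $\mathcal C$ is convex. A mixture of two martingale couplings with first marginal $\mu$ is again such a coupling, and $\overline C$ is linear in $\pi$. Next, every $c\in\mathcal C$ lies in the polyhedron
\[
P=\{c: c_i\geq C_\mu(K_i),\ -1\leq s_1\leq\dots\leq s_{N-1}\leq 0\}.
\]
Indeed, $C_\nu\geq C_\mu$, and the secant slopes of the convex function $C_\nu$ are nondecreasing and lie in $[-1,0]$. Therefore $\operatorname{int}\mathcal C\subset\operatorname{int}P$. It remains to check that $\operatorname{int}P$ is exactly the set given by the strict inequalities. Each defining constraint is an affine inequality in $c$, and by the sufficiency construction below the strict system has a solution. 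Hence no constraint is an implicit equality, and the interior of $P$ is the set where all inequalities are strict. This gives conditions 1 and 2.

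\textbf{Sufficiency.} Let $c$ satisfy the strict conditions. Because the strict conditions define an open set, it suffices to show that every such $c$ lies in $\mathcal C$. Choose slopes $s_0\in(-1,s_1)$ and $s_N\in(s_{N-1},0)$; for $N=1$ take any $-1<s_0<s_1<0$. Let $L$ be the convex piecewise linear function through the points $(K_i,c_i)$ with slopes $s_1,\dots,s_{N-1}$ between nodes, $s_0$ on $(-\infty,K_1]$ and $s_N$ on $[K_N,\infty)$. Set
\[
f=\max(C_\mu,L).
\]
This function has the following properties.
\begin{itemize}
\item It is convex with slopes in $[-1,0]$.
\item Since $c_i>C_\mu(K_i)$, we have $f(K_i)=c_i$.
\item Because $s_N<0$, $L$ eventually drops below $0\le C_\mu$ on the right. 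Because $s_0>-1$, $L$ eventually drops below $m-k\le C_\mu$ on the left. Hence $f=C_\mu$ outside a compact interval.
\end{itemize}
Thus $f=C_\nu$ for the measure $\nu=f''$. This $\nu$ coincides with $\mu$ outside a compact set, so $\nu\in\mathcal P_2$. It has mean $m$ and satisfies $C_\nu\geq C_\mu$, so $\nu\succeq_{cx}\mu$, and therefore $c\in\mathcal C$.

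\textbf{Main obstacle.} The step needing the most care is the necessity direction. One must pass from the non-strict constraints satisfied by all of $\mathcal C$ to the strict ones for its interior. I handle this via convexity of $\mathcal C$ and the identification of $\operatorname{int}P$ with the strict system. The latter in turn uses the nonemptiness guaranteed by the sufficiency construction, so I will prove sufficiency first.
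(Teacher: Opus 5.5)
Your proof is correct and follows the paper's argument. For necessity, $\mathcal C$ lies in each closed half-space defined by the non-strict inequalities, so an equality puts $c$ on its boundary hyperplane. For sufficiency, you build a convex piecewise-linear $L$ through the points $(K_i,c_i)$, take $\max(L,C_\mu)$ to get the call function of some $\nu\succeq_{\mathrm{cx}}\mu$ in $\mathcal P_2$, apply Strassen, and use openness of the strict system. The differences are cosmetic: you fix the tail slopes directly instead of anchoring at $(K_0,\overline\mu-K_0)$ and $(K_{N+1},0)$, and you get $\nu\in\mathcal P_2$ from $\nu=\mu$ off a compact set rather than from integrated call and put prices. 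Also, the sufficiency construction does not itself produce a point of the strict system, but that nonemptiness is unnecessary here because every constraint has a nonzero normal vector.
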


\noindent
The conditions of the proposition provide an explicit criterion for a given call-moment vector to be admissible and to belong to $\operatorname{int} \mathcal{C}$.

\begin{proof}
We first prove necessity. Let $c\in\operatorname{int}\mathcal C$. Then $c=\overline C(\pi)$ for some $\pi\in\mathcal M_2(\mu)$. Let $\nu$ denote the second marginal of $\pi$. The martingale condition implies $\mu\preceq_{\mathrm{cx}}\nu$. Hence, $c_i = C_\nu(K_i)\geq C_\mu(K_i)$. Moreover, $C_\nu$ is convex, nonincreasing, and $1$-Lipschitz. Therefore, if $N\geq2$,
\[
-1\le s_1\le\cdots\le s_{N-1}\le0.
\]

If equality holds in any of the inequalities above, then $c$ lies on the boundary of the corresponding closed half-space containing $\mathcal C$, which contradicts the assumption that $c$ is an interior point.

We now prove sufficiency. Denote $\overline\mu:=\int_{\mathbb R}x\,\mu(dx)$. We construct a call price function $C$ taking the prescribed values $c_1,\dots,c_N$ at $K_1,\dots,K_N$ and corresponding to some measure $\nu\in\mathcal P_2(\mathbb R)$ such that $\mu\preceq_{\mathrm{cx}}\nu$. Suppose first that $N\geq2$: consider a piecewise-linear function passing through the points $\{(K_i,c_i)\}_{i=1}^N$. Its slope on each interval $(K_i,K_{i+1})$ is $s_i$, and it remains to extend it to $(-\infty, K_1)$ and $(K_N, \infty)$. By Jensen's inequality,
\[
c_1>C_\mu(K_1)\geq (\overline \mu-K_1)_+\geq \overline \mu-K_1,
\]
so that $c_1-(\overline \mu-K_1) >0$. Choose $K_0<K_1$ such that the slope
\[
s_0:=\frac{c_1-(\overline \mu-K_0)}{K_1-K_0} = -1+\frac{c_1-(\overline \mu-K_1)}{K_1-K_0}
\]
satisfies $-1<s_0<s_1$. Also, since $c_N > C_\mu(K_N) \geq 0$, we have $c_N>0$.
Choose $K_{N+1}>K_N$ such that
\[
s_N:=\frac{0-c_N}{K_{N+1}-K_N} = -\frac{c_N}{K_{N+1}-K_N}
\]
satisfies $s_{N-1}< s_N < 0$.

Define $L:\mathbb R\to\mathbb R$ as a piecewise-linear function through the points $ (K_0,\overline \mu-K_0)$, $(K_1,c_1),\ldots,(K_N,c_N)$,  $(K_{N+1},0)$, and set additionally
\[
L(K):=\overline \mu-K,\qquad K < K_0,
\]
and
\[
L(K):=0,\qquad K > K_{N+1}.
\]
Then $L$ is a convex, nonincreasing, piecewise-linear function that satisfies $L(K_i)=c_i$ for $i=1,\dots,N$. Its slopes belong to $[-1,0]$, and hence it is $1$-Lipschitz. Now, set
\[
C(K):=\max\{L(K),C_\mu(K)\},\qquad K\in\mathbb R.
\]
The function $C$ retains all the properties listed above for $L$, since $C_\mu$ is also convex, non-increasing, and $1$-Lipschitz by definition, and $C_\mu(K_i) < L(K_i)$.

To establish the existence of $\nu$ corresponding to $C(K) = \int (y - K)_+ \,\nu(dy)$, it remains to verify the limiting behaviour of $C$. For all sufficiently large $K$, we have $L(K)=0$, and therefore
\[
C(K)=\max\{0, C_\mu(K)\}= C_\mu(K),
\]
so that $C(K)\to 0$ as $K\to+\infty$. Similarly, for all sufficiently small $K$, we have $L(K) = \overline\mu -K$. Moreover,
\[
C_\mu(K)=\int (x-K)_+\,\mu(dx) \geq \int (x-K)\,\mu(dx) = \overline\mu -K.
\]
Hence, $C(K) = C_\mu(K)$ for all sufficiently small $K$, and therefore $C(K) + K \to \overline\mu$ as $K\to-\infty$.

Consequently, $C$ is the call price function associated with some probability measure $\nu$ with mean $\overline\mu$:
\[
C(K)=\int (y-K)_+\,\nu(dy),
\]
since the survival function $F_\nu(K):= \nu((K, \infty)) := -C'_+(K)$ is well defined for every $K \in \mathbb R$.

Let us prove $\nu \in \mathcal P_2(\mathbb R)$. Splitting the second moment over $\mathbb R_+$ and $\mathbb R_-$ yields
\[
\int_{\mathbb R_+} y^2\, \nu(dy) = \int_{\mathbb R}(y_+)^2\,\nu(dy) = 2 \int_0^\infty C(K)\,dK,
\]
\[
\int_{\mathbb R_-} y^2\, \nu(dy) = \int_{\mathbb R}(y_-)^2\,\nu(dy) = 2 \int_{-\infty}^0 P_\nu(K)\,dK,
\]
where $P_\nu(K):=\int (K-y)_+\,\nu(dy)$. We have already shown that $C(K) = C_\mu(K)$ for all sufficiently large $K$. As $\mu \in \mathcal P_2(\mathbb R)$,
\[
\int_0^\infty C(K)\,dK<\infty.
\]
Since $\nu$ has mean $\overline \mu$ and $C(K) = C_\mu(K)$ for all sufficiently small $K$,
\[
P_\nu(K) = C(K) + K - \overline\mu = C_\mu(K) + K - \overline\mu = P_\mu(K).
\]
Consequently,
\[
\int_{-\infty}^0 P_\nu(K)\,dK<\infty.
\]
It follows that $\int_{\mathbb R}y^2\,\nu(dy)<\infty$, that is, $\nu\in\mathcal P_2(\mathbb R)$.

Thus, we obtain measures $\mu$ and $\nu$ in $\mathcal P_2(\mathbb R)$ such that $\mu \preceq_{\mathrm{cx}} \nu$ by construction of $C$. By Strassen's theorem \cite[Theorem 9]{Strassen1965}, there exists a martingale coupling $\pi$ with first marginal $\mu$ and second marginal $\nu$. Moreover,
\[
\int_{\mathbb R^2} y^2\,\pi(dx,dy) = \int_{\mathbb R} y^2\,\nu(dy)<\infty,
\]
so that $\pi\in\mathcal M_2(\mu)$. In addition,
\[
\int (y - K_i)_+\, \pi(dx, dy) = \int (y - K_i)_+\, \nu(dy) = C(K_i) = c_i,
\]
hence $c \in \mathcal C$.

We now show that $c$ has a neighbourhood entirely contained in $\mathcal C$. Since all inequalities in the statement are strict and the slopes $s_i$ depend continuously on $c$, there exists $\varepsilon>0$ such that every $c'=(c'_1,\ldots,c'_N) \in B_\varepsilon(c)$ satisfies
\[
c'_i>C_\mu(K_i),\qquad i=1,\dots,N,
\]
and, if $N\geq2$,
\[
s'_i:=\frac{c'_{i+1}-c'_i}{K_{i+1}-K_i} \in (-1, 0), \qquad i=1,\dots,N-1,
\]
and
\[
s'_1 < s'_2 < \ldots < s'_{N-1}.
\]
Thus, $c'$ satisfies the conditions in the statement, hence, as already proved, $c'\in\mathcal C$. Therefore, $B_\varepsilon(c)\subset \mathcal C$, and consequently $c\in\operatorname{int}(\mathcal C)$.

The case $N=1$ is treated similarly: the piecewise-linear function is constructed through the single point $(K_1, c_1)$.
\end{proof}

For a probability measure $\rho\in\mathcal P_2(\mathbb R)$, consider
\[
\operatorname{MC}(\rho,\gamma)=
\sup_{q\in\Pi(\rho,\gamma)}\int yz\,q(dy,dz).
\]
For $\pi\in\mathcal M_2(\mu)$, define
\[
J_\gamma(\pi):=
\int \operatorname{MC}(\pi^x,\gamma)\,\mu(dx)
-\frac12\int y^2\,\pi(dx,dy).
\]
Without loss of generality, throughout what follows we assume that
\[
\int z\, \gamma(dz) = 0, \qquad\int z^2\, \gamma(dz) = 1.
\]
Indeed, denote
\[
m_\gamma:=\int z\,\gamma(dz), \qquad m_\mu:=\int  x\,\mu(dx)
\]
and $\gamma_0:=(z\mapsto z-m_\gamma)_\#\gamma$. Then for every $\rho\in\mathcal P_2(\mathbb R)$
\[
\operatorname{MC}(\rho,\gamma) = \operatorname{MC}(\rho,\gamma_0) + m_\gamma\int y\,\rho(dy).
\]
Consequently, for every $\pi\in\mathcal M_2(\mu)$, the martingale condition gives
\[
J_\gamma(\pi) = J_{\gamma_0}(\pi) + m_\gamma\, m_\mu.
\]
Thus, the two optimization problems have the same optimal couplings, and their objective functionals differ by an additive constant independent of $\pi$.

Suppose now that $\gamma$ is centred and let $\sigma^2:=\int z^2\,\gamma(dz)>0$. For the scaling map $S_\sigma(x):=x/\sigma$, set
\[
\hat\gamma: (S_\sigma)_\#\gamma,\qquad
\hat\mu:=(S_\sigma)_\#\mu,\qquad
\hat\pi:=(S_\sigma, S_\sigma)_\#\pi,
\]
and also
\[
\hat K_i:=\frac{K_i}{\sigma}, \qquad \hat c_i:=\frac{c_i}{\sigma}.
\]
Then $\hat\gamma$ has unit second moment, and the first marginal of $\hat\pi$ is $\hat\mu$. Moreover, by the definition of $\hat\pi$, for $\hat\mu$-a.e. $\hat x$, with $x = \sigma \hat x$,
\[
\hat\pi^{\hat x}=(S_\sigma)_\#\pi^x.
\]
In particular,
\[
\int \hat y\,\hat\pi^{\hat x}(d\hat y) = \frac1\sigma \int y\,\pi^x(dy) = \frac{x}{\sigma} = \hat x, \qquad
\int \hat y^2\,\hat\pi(d\hat x,d\hat y) = \frac1{\sigma^2}\int y^2\,\pi(dx,dy)<\infty
\]
and therefore $\hat\pi \in \mathcal M_2(\hat\mu)$. Furthermore, the map $q\longmapsto (S_\sigma,S_\sigma)_\#q$ defines a one-to-one correspondence between $\Pi(\pi^x,\gamma)$ and $\Pi(\hat\pi^{\hat x},\hat\gamma)$. Therefore,
\[
\operatorname{MC}(\pi^x,\gamma) = \sigma^2
\operatorname{MC}(\hat\pi^{\hat x},\hat\gamma),
\]
and hence $J_\gamma(\pi) = \sigma^2 J_{\hat\gamma}(\hat\pi)$. Moreover,
\[
\int (\hat y - \hat K_i)_+\, d\hat\pi = \frac{1}{\sigma}\int (y - K_i)_+\, d\pi, \quad i = 1, \ldots, N.
\]
Thus, the condition $\overline C_K(\pi)=c$ is equivalent to $\overline C_{\hat K}(\hat\pi)=\hat c$, and the scaling map establishes a one-to-one correspondence between admissible couplings of the original and normalized problems. Consequently, problem \eqref{eq:main-problem} reduces to an equivalent problem with a centered reference measure of unit second moment. 

In what follows, we suppress the subscript $\gamma$ and write $J$ for $J_\gamma$. We now establish the relation between the functional $J$ and the $W_2$ distance.

\begin{lemma}\label{lem:square}
For every $\rho\in\mathcal P_2(\mathbb R)$,
\begin{equation}\label{eq:square}
\operatorname{MC}(\rho,\gamma) - \frac12 \int y^2\, \rho(dy) = \frac12 - \frac12 W_2^2 (\rho,\gamma).
\end{equation}
In particular, for $\pi \in \mathcal M_2(\mu)$,
\[
-\infty < J(\pi) = \frac12 - \frac12 \int  W_2^2 (\pi^x,\gamma)\, \mu(dx) \leq  \frac12.
\]
\end{lemma}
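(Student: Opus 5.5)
The identity \eqref{eq:square} will follow by expanding the square in the definition of $W_2$ and using the normalisation $\int z^2\,\gamma(dz)=1$ fixed above. For any coupling $q\in\Pi(\rho,\gamma)$ both marginals have finite second moments, so $\int |y-z|^2\,q(dy,dz)$ is finite and
\[
\int |y-z|^2\,q(dy,dz) = \int y^2\,\rho(dy) + \int z^2\,\gamma(dz) - 2\int yz\,q(dy,dz) = \int y^2\,\rho(dy) + 1 - 2\int yz\,q(dy,dz).
\]
The first two terms do not depend on $q$. Taking the infimum over $q\in\Pi(\rho,\gamma)$ on the left therefore amounts to taking the supremum of $\int yz\,dq$ on the right, which gives
\[
W_2^2(\rho,\gamma)=\int y^2\,\rho(dy)+1-2\operatorname{MC}(\rho,\gamma).
\]
Rearranging yields \eqref{eq:square}. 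Here $\operatorname{MC}(\rho,\gamma)$ is finite, since by Cauchy--Schwarz $\int |yz|\,dq\le (\int y^2\,d\rho)^{1/2}$.

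For the statement about $J$, apply \eqref{eq:square} with $\rho=\pi^x$ for $\mu$-a.e.\ $x$ and integrate against $\mu$. Before doing so I need measurability of $x\mapsto \operatorname{MC}(\pi^x,\gamma)$ and the finiteness of $\int y^2\,\pi^x(dy)$ for $\mu$-a.e.\ $x$. The second is immediate from $\int y^2\,d\pi<\infty$ and Fubini. For the first, I would note that in dimension one the optimal coupling is the comonotone one, so
\[
\operatorname{MC}(\rho,\gamma)=\int_0^1 F_\rho^{-1}(u)\,F_\gamma^{-1}(u)\,du,
\]
which is a measurable functional of the kernel. Alternatively, $W_2$ is lower semicontinuous and the kernel $x\mapsto\pi^x$ is measurable, which also suffices.

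Integrating then gives
\[
J(\pi)=\int\Bigl(\operatorname{MC}(\pi^x,\gamma)-\tfrac12\int y^2\,\pi^x(dy)\Bigr)\mu(dx)=\tfrac12-\tfrac12\int W_2^2(\pi^x,\gamma)\,\mu(dx).
\]
This step needs the two integrals in the definition of $J$ to make sense separately, so that splitting is legitimate. That holds because
\[
|\operatorname{MC}(\pi^x,\gamma)|\le \Bigl(\int y^2\,\pi^x(dy)\Bigr)^{1/2}\le \tfrac12\Bigl(1+\int y^2\,\pi^x(dy)\Bigr),
\]
and the right-hand side is $\mu$-integrable.

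The upper bound $J(\pi)\le\frac12$ follows from $W_2^2\ge0$. For $J(\pi)>-\infty$, use the triangle inequality
\[
W_2^2(\pi^x,\gamma)\le 2\int y^2\,\pi^x(dy)+2,
\]
whose right-hand side is $\mu$-integrable. The only mildly delicate point in the whole argument is this measurability and integrability bookkeeping; the rest is algebra.
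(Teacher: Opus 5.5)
Your proof is correct and follows the same route as the paper. Both arguments expand $\int (y-z)^2\,q(dy,dz)$ for each $q\in\Pi(\rho,\gamma)$ using $\int z^2\,\gamma(dz)=1$, then optimize over $q$, and bound $J$ through $W_2^2\ge 0$ and $W_2^2(\pi^x,\gamma)\le 2\int y^2\,\pi^x(dy)+2$; your measurability and integrability checks, which justify splitting the integrals in $J$, are details the paper leaves implicit.
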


\begin{proof}
For every coupling $q\in\Pi(\rho,\gamma)$, we have
\[
\int yz\,q(dy,dz)-\frac12\int y^2\,\rho(dy)=\frac12-\frac12\int (y-z)^2\,q(dy,dz).
\]
Therefore, taking the supremum over $q\in\Pi(\rho,\gamma)$ is equivalent to taking the infimum of the integral of the quadratic cost, which equals $W_2^2(\rho,\gamma)$ by definition. This proves \eqref{eq:square}.

To obtain the bounds on $J(\pi)$, it suffices to use the nonnegativity of $W^2_2(\pi^x,\gamma)$ together with
\[
\int (y-z)^2\, q(dy, dz) \leq 2\int y^2\, \pi^x(dy) + 2 < \infty \qquad \forall q \in \Pi(\pi^x, \gamma), 
\]
since $\pi \in \mathcal M_2(\mu)$.
\end{proof}

Finally, for $c\in\mathcal C$, we can write \eqref{eq:main-problem} in the simplified form
\[
V(c)=
\sup\{J(\pi):\pi\in\mathcal M_2(\mu),\ \overline C(\pi)=c\}.
\]
Lemma \ref{lem:square} shows how the weak martingale optimal transport problem \eqref{eq:wmot} leads to problem \eqref{eq:main-problem} with moment constraints.

We now introduce the notation needed to formulate the dual problem. For
$\lambda=(\lambda_1,\dots,\lambda_N)\in\mathbb R^N$, define
\[
\Lambda_\lambda(y):=\sum_{i=1}^N\lambda_i(y-K_i)_+.
\]
Set
\[
L_\lambda:=\sum_{i=1}^N|\lambda_i|,
\qquad
B_\lambda:=\sum_{i=1}^N|\lambda_i|\,|K_i|.
\]
Then, for every $y\in\mathbb R$,
\begin{equation}\label{eq:Lambda-bound}
|\Lambda_\lambda(y)|
\leq
\sum_{i=1}^N|\lambda_i|\,\bigl(|y|+|K_i|\bigr)
=
L_\lambda |y|+B_\lambda.    
\end{equation}
Next, define
\[
H_\lambda(u):=
\sup_{y\in\mathbb R}
\left\{
uy-\frac12y^2-\Lambda_\lambda(y)
\right\},
\qquad u\in\mathbb R,
\]
and establish the key properties of this function.

\begin{lemma}\label{lem:H-properties}
For every $\lambda\in\mathbb R^N$, the function $H_\lambda(u)$ has the following properties:
\begin{enumerate}[label=(\roman*)]
\item $H_\lambda$ is finite and convex, and for every $u\in\mathbb R$,
\begin{equation}\label{eq:H-growth}
\frac12u^2-L_\lambda |u|-B_\lambda
\leq H_\lambda(u)
\leq \frac12(|u|+L_\lambda)^2+B_\lambda.
\end{equation}

\item For each $u\in\mathbb R$, the set of maximizers
\[
\Gamma_\lambda(u):=\operatorname*{argmax}_{y\in\mathbb R}\Bigl\{uy-\frac12 y^2-\Lambda_\lambda(y)\Bigr\}
\]
is nonempty and compact.

\item For every $u\in\mathbb R$ and any $y\in \Gamma_\lambda(u)$,
\begin{equation}\label{eq:T-growth-maximizers}
|y-u|\leq L_\lambda.
\end{equation} 

\item We have
\[
\min \Gamma_\lambda(u)=H'_{\lambda,-}(u),\qquad \max \Gamma_\lambda(u)=H'_{\lambda,+}(u).
\]
In particular,
\begin{equation}\label{eq:T-growth}
|\min \Gamma_\lambda(u)|\leq L_\lambda + |u|.
\end{equation}
\end{enumerate}
\end{lemma}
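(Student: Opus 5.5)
The plan is to view $H_\lambda$ as the Legendre transform of the convex-minus-piecewise-linear function $f_\lambda(y):=\frac12y^2+\Lambda_\lambda(y)$, i.e. $H_\lambda=f_\lambda^*$. I will not assume $f_\lambda$ is convex, since $\lambda_i$ may be negative. All four properties will come from the elementary bound \eqref{eq:Lambda-bound} together with standard facts about suprema of affine functions.

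\emph{(i) and (ii).} Finiteness and the two-sided bound \eqref{eq:H-growth} follow by sandwiching. Using $\Lambda_\lambda(y)\ge -L_\lambda|y|-B_\lambda$, we get
\[
uy-\tfrac12y^2-\Lambda_\lambda(y)\le |u||y|+L_\lambda|y|-\tfrac12 y^2+B_\lambda\le \tfrac12(|u|+L_\lambda)^2+B_\lambda .
\]
For the lower bound I take $y=u$ and use $\Lambda_\lambda(u)\le L_\lambda|u|+B_\lambda$. Convexity holds because $H_\lambda$ is a supremum of affine functions of $u$. The objective $y\mapsto uy-\frac12y^2-\Lambda_\lambda(y)$ is continuous and tends to $-\infty$ as $|y|\to\infty$, so it is coercive. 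Hence the argmax is nonempty, and as a closed bounded set it is compact.

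\emph{(iii).} Let $y\in\Gamma_\lambda(u)$ and compare its value with the value at $y=u$. Writing $uy-\frac12y^2=\frac12u^2-\frac12(y-u)^2$, optimality gives
\[
\tfrac12(y-u)^2\le \Lambda_\lambda(u)-\Lambda_\lambda(y)\le L_\lambda|y-u|,
\]
since $\Lambda_\lambda$ is $L_\lambda$-Lipschitz. Dividing by $|y-u|$ yields $|y-u|\le 2L_\lambda$, which is off by a factor of 2. This is the main obstacle, and I would fix it with first-order conditions instead. At an interior maximizer, $0\in u-y-\partial^{C}\Lambda_\lambda(y)$ in the Clarke sense. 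Since $\Lambda_\lambda$ is piecewise linear, its one-sided derivatives lie in $[-L_\lambda,L_\lambda]$. Maximality at $y$ implies that the right derivative of the objective is $\le0$ and the left derivative is $\ge0$, that is, $u-y-\Lambda'_{\lambda,+}(y)\le0\le u-y-\Lambda'_{\lambda,-}(y)$. Hence
\[
u-y\in[\Lambda'_{\lambda,-}(y),\Lambda'_{\lambda,+}(y)]\cup[\Lambda'_{\lambda,+}(y),\Lambda'_{\lambda,-}(y)]\subset[-L_\lambda,L_\lambda],
\]
which gives $|y-u|\le L_\lambda$.

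\emph{(iv).} This is the envelope/Danskin formula for the convex function $H_\lambda$. For $h>0$ and any $y\in\Gamma_\lambda(u)$ we have $H_\lambda(u+h)-H_\lambda(u)\ge hy$, so $H'_{\lambda,+}(u)\ge\max\Gamma_\lambda(u)$, and similarly $H'_{\lambda,-}(u)\le\min\Gamma_\lambda(u)$. For the reverse inequality, take $h_n\downarrow0$ and $y_n\in\Gamma_\lambda(u+h_n)$. By (iii) the $y_n$ are bounded, so a subsequence converges to some $y^*$. Continuity of the objective and of $H_\lambda$ gives $y^*\in\Gamma_\lambda(u)$. From $H_\lambda(u+h_n)-H_\lambda(u)\le h_ny_n$ we get $H'_{\lambda,+}(u)\le y^*\le\max\Gamma_\lambda(u)$. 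The left derivative is handled symmetrically. Finally, \eqref{eq:T-growth} follows from (iii) via $|\min\Gamma_\lambda(u)|\le|u|+L_\lambda$.
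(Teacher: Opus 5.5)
Your proof is correct and follows essentially the same route as the paper. Parts (i), (ii) and (iv) match the paper's arguments: sandwiching via \eqref{eq:Lambda-bound}, coercivity, and the subgradient inclusion combined with a compactness argument based on (iii). Your one-sided-derivative optimality condition in (iii) is the infinitesimal form of the paper's comparison of $y$ with $y\pm h$ as $h\downarrow 0$; the Clarke remark and the union of intervals are unnecessary, since the two one-sided conditions already give $\Lambda'_{\lambda,-}(y)\le u-y\le\Lambda'_{\lambda,+}(y)$.
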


\begin{proof}
\begin{enumerate}[label=(\roman*)]
\item By \eqref{eq:Lambda-bound},
\[
uy-\frac12 y^2-\Lambda_\lambda(y) \leq (|u|+L_\lambda)|y|-\frac12 y^2+B_\lambda \leq \frac12(|u|+L_\lambda)^2+B_\lambda.
\]
Taking the supremum over $y$ proves the upper bound in \eqref{eq:H-growth}. The lower bound follows by taking $y=u$ and using \eqref{eq:Lambda-bound} again:
\[
H_\lambda(u)\geq u^2-\frac12u^2-\Lambda_\lambda(u) \geq \frac12u^2-L_\lambda|u|-B_\lambda.
\]
Thus, $H_\lambda$ is finite. Its convexity comes from the fact that the supremum is taken over the family of affine functions $u \longmapsto uy - \frac{1}{2} y^2 - \Lambda_\lambda(y)$.

\item For fixed $u$, the function
\[
y\longmapsto uy-\frac12 y^2-\Lambda_\lambda(y)
\]
is continuous in $y$. Moreover,
\[
uy-\frac12 y^2-\Lambda_\lambda(y) \leq (|u|+L_\lambda)|y|-\frac12 y^2+B_\lambda \xrightarrow[|y|\to\infty]{} -\infty.
\]
Hence, the maximum is attained, and $\Gamma_\lambda(u)$ is nonempty and compact.

\item Let $u\in\mathbb R$ and $y\in\Gamma_\lambda(u)$. Then for every $h>0$
\[
H_\lambda(u) = uy-\frac12 y^2-\Lambda_\lambda(y) \geq u(y+h)-\frac12 (y+h)^2-\Lambda_\lambda(y+h).
\]
Rearranging gives
\[
0\geq h(u-y)-\frac12 h^2-\bigl(\Lambda_\lambda(y+h)-\Lambda_\lambda(y)\bigr) \geq h(u-y)-\frac12 h^2-L_\lambda h,
\]
since
\[
|\Lambda_\lambda(y+h)-\Lambda_\lambda(y)| \leq L_\lambda h.
\]

Dividing by $h>0$ and letting $h\downarrow0$, we obtain
\[
u-y\leq L_\lambda.
\]

Similarly, comparing the values of $y\longmapsto uy-\frac12 y^2-\Lambda_\lambda(y)$ in $y$ and $y-h$, we obtain $y-u\leq L_\lambda$.
Thus, \eqref{eq:T-growth-maximizers} holds.

\item
Since $H_\lambda(u)$ is convex, $\partial H_\lambda(u) = [H'_{\lambda, -}, H'_{\lambda, +}] = \text{conv}\left (\Gamma_\lambda(u) \right )$, that is, $\min \Gamma_\lambda(u) = H'_{\lambda, -}(u)$ and $\max \Gamma_\lambda(u) = H'_{\lambda, +}(u)$. Indeed, if $y\in\Gamma_\lambda(u)$, then for every $v\in\mathbb R$
\[ 
H_\lambda(v)\geq vy-\frac12y^2-\Lambda_\lambda(y) = H_\lambda(u)+y(v-u). 
\] 
Thus, $y\in\partial H_\lambda(u)$, and therefore
\[ 
H'_{\lambda,-}(u) \leq \min \Gamma_\lambda(u) \leq \max\Gamma_\lambda(u)\leq H'_{\lambda,+}(u). 
\] 

Let us prove the reverse inequalities. Suppose that $h>0$ and $y_h\in\Gamma_\lambda(u-h)$. Then
\[ 
H_\lambda(u)\geq uy_h-\frac12y_h^2-\Lambda_\lambda(y_h) =H_\lambda(u-h)+hy_h, 
\] 
that is,
\[ 
y_h\leq \frac{H_\lambda(u)-H_\lambda(u-h)}h\leq H'_{\lambda,-}(u). 
\] 
By part (iii), the family $\{y_h:0<h\le1\}$ is bounded. Subsequently, as $h\downarrow0$, we may choose a convergent subsequence $y_{h_n}\to y$. By the continuity of $H_\lambda$ and the function $(u,y)\mapsto uy-\frac12y^2-\Lambda_\lambda(y)$, we have
$y\in\Gamma_\lambda(u)$, and hence
\[
H'_{\lambda,-}(u)\geq y\geq \min\Gamma_\lambda(u).
\]
Similarly, $H'_{\lambda,+}(u)\leq \max \Gamma_\lambda(u)$. Therefore, $H'_{\lambda,-}(u) = \min\Gamma_\lambda(u)$ and $H'_{\lambda,+}(u) = \max\Gamma_\lambda(u)$.

Finally, \eqref{eq:T-growth} follows directly from \eqref{eq:T-growth-maximizers}.
\end{enumerate}
\end{proof}

By estimate \eqref{eq:H-growth}, the convolution of $H_\lambda$ with reference distribution $\gamma$ is well defined:
\[
\Psi_\lambda(a):= H_\lambda * \gamma = \int H_\lambda(a+z)\,\gamma(dz),\qquad a\in\mathbb R.
\]
We also denote $T_\lambda(u):= \min \Gamma_\lambda(u) = H'_{\lambda,-}(u)$. Since the left derivative of a convex function is Borel measurable and non-decreasing, the same properties hold for $T_\lambda$ as a function of $u$.

\begin{lemma}\label{lem:Psi-properties}
For every $\lambda \in \mathbb R^N$ the function $\Psi_\lambda(a)$ is finite, convex, and continuously differentiable on $\mathbb R$, with
\begin{equation}\label{eq:Psi-derivative}
\Psi_\lambda'(a)=\int T_\lambda(a+z)\,\gamma(dz),\qquad a\in\mathbb R.
\end{equation}
Moreover, the following estimate holds:
\begin{equation}\label{eq:Psi-prime-growth}
|\Psi_\lambda'(a)-a|\le L_\lambda, \qquad a\in\mathbb R.
\end{equation}
In addition, for every $x\in\mathbb R$, the function $a\longmapsto ax - \Psi_\lambda(a)$ attains its maximum at $\mathbb R$.
\end{lemma}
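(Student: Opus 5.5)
Finiteness and convexity of $\Psi_\lambda$ come first. By \eqref{eq:H-growth}, $|H_\lambda(a+z)|\le \frac12(|a|+|z|+L_\lambda)^2+B_\lambda$, which is $\gamma$-integrable because $\gamma\in\mathcal P_2(\mathbb R)$. Hence $\Psi_\lambda(a)$ is finite for every $a$. Convexity follows because $a\mapsto H_\lambda(a+z)$ is convex for each $z$ (Lemma~\ref{lem:H-properties}(i)) and integration against the probability measure $\gamma$ preserves convexity.

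For differentiability, I will differentiate under the integral via difference quotients. For $h\neq0$ the quotient $\frac{H_\lambda(a+h+z)-H_\lambda(a+z)}{h}$ lies between $H'_{\lambda,-}$ and $H'_{\lambda,+}$ evaluated at points of $[a+z-|h|,a+z+|h|]$, by convexity. By Lemma~\ref{lem:H-properties}(iii)--(iv), every element of $\Gamma_\lambda(u)$ lies within $L_\lambda$ of $u$. So both one-sided derivatives at $u$ are bounded by $|u|+L_\lambda$, and the quotients are dominated by $|a|+|z|+1+L_\lambda$ for $|h|\le1$, which is $\gamma$-integrable. As $h\downarrow0$ (resp.\ $h\uparrow0$) the quotients converge pointwise to $H'_{\lambda,+}(a+z)$ (resp.\ $H'_{\lambda,-}(a+z)$). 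Dominated convergence then gives
\[
\Psi'_{\lambda,\pm}(a)=\int H'_{\lambda,\pm}(a+z)\,\gamma(dz).
\]
The two one-sided derivatives of the convex function $H_\lambda$ differ only at its (at most countably many) kinks. Since $\gamma$ is absolutely continuous, the set $\{z: a+z \text{ is a kink}\}$ is $\gamma$-null. Therefore the two integrals agree, $\Psi_\lambda$ is differentiable, and \eqref{eq:Psi-derivative} holds with $T_\lambda=H'_{\lambda,-}$. A differentiable convex function on $\mathbb R$ is automatically $C^1$, since its derivative is monotone and has no jumps. This absolute-continuity step is the only delicate point.

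The growth estimate \eqref{eq:Psi-prime-growth} then follows from \eqref{eq:T-growth-maximizers} applied to $y=T_\lambda(a+z)\in\Gamma_\lambda(a+z)$:
\[
\Bigl|\Psi'_\lambda(a)-a\Bigr|=\Bigl|\int \bigl(T_\lambda(a+z)-(a+z)\bigr)\gamma(dz)\Bigr|\le L_\lambda,
\]
where I use $\int z\,\gamma(dz)=0$.

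For the attainment of $\sup_a\{ax-\Psi_\lambda(a)\}$, note that the function $f(a)=ax-\Psi_\lambda(a)$ is concave and $C^1$ with $f'(a)=x-\Psi'_\lambda(a)$. By \eqref{eq:Psi-prime-growth}, $f'(a)\ge x-a-L_\lambda>0$ for $a<x-L_\lambda$, and $f'(a)\le x-a+L_\lambda<0$ for $a>x+L_\lambda$. So $f$ increases to the left of $x-L_\lambda$ and decreases to the right of $x+L_\lambda$, and its supremum equals its maximum on the compact interval $[x-L_\lambda,x+L_\lambda]$. Continuity of $f$ on this interval gives attainment.
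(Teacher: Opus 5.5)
Your proof is correct. For finiteness, convexity, differentiation under the integral, the absolute-continuity step and the estimate \eqref{eq:Psi-prime-growth}, it follows the paper's proof.

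The one genuine difference is the attainment step.
\begin{itemize}
\item The paper integrates the lower bound in \eqref{eq:H-growth} to get the coercivity estimate \eqref{eq:Psi-bounds}, $\Psi_\lambda(a)\ge\frac12a^2-C_\lambda|a|-C'_\lambda$. Hence $ax-\Psi_\lambda(a)\to-\infty$ as $|a|\to\infty$.
\item You instead use the derivative bound \eqref{eq:Psi-prime-growth}. It shows that $a\mapsto ax-\Psi_\lambda(a)$ is increasing on $(-\infty,x-L_\lambda]$ and decreasing on $[x+L_\lambda,\infty)$, and then you apply compactness of $[x-L_\lambda,x+L_\lambda]$.
\end{itemize}
Your route costs nothing extra, since \eqref{eq:Psi-prime-growth} is already proved at that point. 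It also places every maximizer in $[x-L_\lambda,x+L_\lambda]$. That is exactly the bound $|a_\lambda(x)-x|\le L_\lambda$, which the paper re-derives later in Lemma~\ref{lem:penalized}. The paper's route, for its part, produces \eqref{eq:Psi-bounds}, which it reuses after Theorem~\ref{thm:main} to show $\Psi_\lambda^*\in L^1(\mu)$.

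There is also a minor difference in the continuity of $\Psi_\lambda'$. You invoke the general fact that a differentiable convex function on $\mathbb R$ is $C^1$. The paper appeals to the monotonicity of $T_\lambda$ together with \eqref{eq:T-growth}. Both are fine.
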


\begin{proof}
It follows from \eqref{eq:H-growth} that the function $z\mapsto H_\lambda(a+z)$ is integrable with respect to $\gamma$ for every $a\in\mathbb R$. Hence, $\Psi_\lambda$ is well defined and finite, and is convex by Lemma \ref{lem:H-properties}.

We now show that $\Psi_\lambda\in C^1(\mathbb R)$. Since $H_\lambda$ is convex, its one-sided derivatives exist everywhere and the set where they differ is, at most, countable. Moreover, according to Lemma~\ref{lem:H-properties},
\[ 
|H'_{\lambda,\pm}(u)|\leq |u|+L_\lambda. 
\] 
Fix $a\in\mathbb R$. By the convexity of $H_\lambda$, for $h>0$,
\[ 
H'_{\lambda,+}(a-h+z) \leq \frac{H_\lambda(a+z)-H_\lambda(a-h+z)}h \leq H'_{\lambda,-}(a+z). 
\] 
For $0<h\leq1$, this difference quotient is dominated by $|a+z|+1+L_\lambda$, which is integrable with respect to $\gamma$. Therefore, by Lebesgue's dominated convergence theorem
\[ 
\Psi'_{\lambda,-}(a)=\int H'_{\lambda,-}(a+z)\,\gamma(dz). 
\] 
Similarly,
\[ 
\Psi'_{\lambda,+}(a)=\int H'_{\lambda,+}(a+z)\,\gamma(dz). 
\] 
Since $\gamma$ is absolutely continuous and the set of non-differentiability points of $H_\lambda$ is at most countable, these two integrals coincide. Hence, $\Psi_\lambda$ is differentiable and
\[ 
\Psi_\lambda'(a)=\int H'_{\lambda,-}(a+z)\,\gamma(dz) = \int T_\lambda(a+z)\,\gamma(dz). 
\] 
The continuity of $\Psi'_\lambda$ follows from the monotonicity of $T_\lambda$ and \eqref{eq:T-growth}.

Next, using \eqref{eq:T-growth} once again, we obtain \eqref{eq:Psi-prime-growth}, since $\int z\,\gamma(dz)=0$:
\[
|\Psi_\lambda'(a)-a| = \left| \int \bigl( T_\lambda(a+z) - a+z) \bigr) \,\gamma(dz) \right| \leq \int \bigl| T_\lambda(a+z) - (a+z) \bigr| \,\gamma(dz) \leq L_\lambda.
\]

Finally, the lower bound in \eqref{eq:H-growth} gives
\[
H_\lambda(a+z)\geq \frac12(a+z)^2-L_\lambda|a+z|-B_\lambda.
\]
Integrating that with respect to $\gamma$, we obtain
\begin{equation}\label{eq:Psi-bounds}
\Psi_\lambda(a)\geq \frac12 a^2-C_\lambda|a|-C'_\lambda
\end{equation}
for some finite constants $C_\lambda,C'_\lambda$. Consequently,
\[
ax - \Psi_\lambda(a)\to -\infty \quad \text{as } |a|\to\infty
\]
for every $x\in\mathbb R$. Hence, the continuous function $a\mapsto ax-\Psi_\lambda(a)$ reaches its maximum in $\mathbb R$.
\end{proof}

By the preceding lemma, we may define $\Psi_\lambda^*$, the Legendre transform of $\Psi_\lambda$, by
\[
\Psi_\lambda^*(x):=\sup_{a\in\mathbb R}\{ax-\Psi_\lambda(a)\}.
\]

\section{Duality theorem}\label{sec:dual-theorem}

\begin{theorem}\label{thm:main}
For every $c\in\mathcal C$ we have
\begin{equation}\label{eq:weak-dual} 
V(c)\leq \inf_{\lambda\in\mathbb R^N}\left\{\lambda\cdot c-\int \Psi_\lambda^*(x)\,\mu(dx)\right\}.
\end{equation}
Assume, in addition, that $c \in \operatorname{int} (\mathcal{C})$. Then there is no duality gap:
\begin{equation}\label{eq:strong-dual}
V(c)= \inf_{\lambda\in\mathbb R^N}\left\{\lambda\cdot c-\int \Psi_\lambda^*(x)\,\mu(dx)\right\}.
\end{equation}
\end{theorem}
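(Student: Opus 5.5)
Weak duality comes first. Fix $\lambda\in\mathbb R^N$ and $\pi\in\mathcal M_2(\mu)$ with $\overline C(\pi)=c$. Then
\[
J(\pi)-\lambda\cdot c=\int\Bigl[\operatorname{MC}(\pi^x,\gamma)-\int\bigl(\tfrac12y^2+\Lambda_\lambda(y)\bigr)\pi^x(dy)\Bigr]\mu(dx).
\]
We bound the bracket pointwise in $x$ by $-\Psi_\lambda^*(x)$. For any $q\in\Pi(\pi^x,\gamma)$ and any $a\in\mathbb R$, the definition of $H_\lambda$ with $u=a+z$ gives
\[
yz-\tfrac12y^2-\Lambda_\lambda(y)\le H_\lambda(a+z)-ay .
\]
Integrate this against $q$ and use the martingale property $\int y\,\pi^x(dy)=x$. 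The integrand is controlled by \eqref{eq:H-growth}, and all moments are finite. The result is
\[
\int yz\,dq-\int\bigl(\tfrac12y^2+\Lambda_\lambda\bigr)d\pi^x\le\Psi_\lambda(a)-ax .
\]
Taking the supremum over $q$ and the infimum over $a$ yields the bound $-\Psi^*_\lambda(x)$. Integrating in $\mu$ gives $J(\pi)\le\lambda\cdot c-\int\Psi_\lambda^*\,d\mu$. Measurability of $x\mapsto\operatorname{MC}(\pi^x,\gamma)$ is routine, and $\Psi_\lambda^*\ge -\Psi_\lambda(0)$ is bounded below, so the integral makes sense. This proves \eqref{eq:weak-dual}.

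For strong duality I would use the standard perturbation / Lagrangian argument in finite dimensions. Extend $V$ to all of $\mathbb R^N$ by setting $V=-\infty$ off $\mathcal C$. The function $V$ is concave, since $\mathcal M_2(\mu)$ is convex, $\overline C$ is linear, and $\pi\mapsto J(\pi)$ is concave. Concavity of $J$ holds because $\operatorname{MC}(\cdot,\gamma)$ is concave under mixtures: mixtures of couplings are couplings, and the conditional kernels mix linearly. By Lemma \ref{lem:square}, $V\le\frac12$ and $V>-\infty$ on $\mathcal C$. A finite concave function on the open convex set $\operatorname{int}\mathcal C$ has a supergradient at every interior point. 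So for $c\in\operatorname{int}\mathcal C$ there is $\lambda\in\mathbb R^N$ with $V(c')\le V(c)+\lambda\cdot(c'-c)$ for all $c'$. Equivalently,
\[
V(c)-\lambda\cdot c\ \ge\ \sup_{\pi\in\mathcal M_2(\mu)}\Bigl\{J(\pi)-\lambda\cdot\overline C(\pi)\Bigr\}.
\]

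It then remains to show that the unconstrained Lagrangian supremum on the right equals $-\int\Psi_\lambda^*\,d\mu$, or at least dominates it. This is the main obstacle: we must construct nearly optimal martingale kernels pointwise in $x$. Fix $x$ and let $a(x)$ be a maximizer of $ax-\Psi_\lambda(a)$; it exists by Lemma \ref{lem:Psi-properties} and can be chosen measurably by the selection theorems cited. By first-order optimality, $\Psi_\lambda'(a(x))=x$. Define
\[
\pi^x:=\bigl(T_\lambda(a(x)+\cdot)\bigr)_\#\gamma .
\]
By \eqref{eq:Psi-derivative} its mean is $x$. By \eqref{eq:T-growth}, $\int y^2\,\pi(dx,dy)\le C\int(1+a(x)^2)\,\mu(dx)<\infty$, using $|a(x)-x|\le L_\lambda$ from \eqref{eq:Psi-prime-growth}. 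Hence $\pi\in\mathcal M_2(\mu)$.

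Take $q$ to be the coupling $(T_\lambda(a+z),z)$. Since $T_\lambda(a+z)\in\Gamma_\lambda(a+z)$, the inequality from the first step becomes an equality $\gamma$-a.e. So
\[
\int yz\,dq-\int\bigl(\tfrac12y^2+\Lambda_\lambda\bigr)d\pi^x=\Psi_\lambda(a)-ax=-\Psi^*_\lambda(x),
\]
and $\operatorname{MC}(\pi^x,\gamma)\ge\int yz\,dq$. Integrating gives $\sup_\pi\{J-\lambda\cdot\overline C\}\ge-\int\Psi_\lambda^*\,d\mu$. Combining with the supergradient inequality and with weak duality yields \eqref{eq:strong-dual}, with the infimum attained at $\lambda$.

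The delicate points to check are the measurable choice of $a(x)$ and the integrability of $\Psi_\lambda^*$ under $\mu$, which follows from quadratic bounds on $\Psi_\lambda$ similar to \eqref{eq:Psi-bounds}. The interiority assumption is used only to guarantee that $V$ is finite near $c$, so that the supergradient exists.
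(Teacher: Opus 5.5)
Your proposal is correct and follows the paper's proof essentially step for step. Weak duality comes from the pointwise inequality $yz-\tfrac12y^2-\Lambda_\lambda(y)\le H_\lambda(a+z)-ay$. Strong duality comes from a supergradient of the concave value function $V$ at the interior point $c$, combined with the explicit Lagrangian maximizer $\bigl(T_\lambda(a(x)+\cdot)\bigr)_\#\gamma$ of Lemma~\ref{lem:penalized}. Your one small streamlining is that you get concavity of $V$ directly from mixing couplings; this correctly shows that the existence and strict-concavity results behind Lemma~\ref{lem:concave} are not needed here.
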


\noindent
Formula \eqref{eq:strong-dual} is an analogue of the duality result for weak martingale optimal transport in \cite{BVBST}; the main difference lies in the construction of the function $H_\lambda$, which reflects incomplete information about the second marginal. Note that the dual functional is well defined since $\Psi^*_\lambda \in L^1(\mu)$. Indeed, taking $a=0$ and using \eqref{eq:Psi-bounds}, we obtain the two-sided estimate
\[
-\Psi_\lambda(0) \leq \Psi^*_\lambda(x) \leq \frac12(|x| + C_\lambda)^2 + C'_\lambda.
\]

Before proving the theorem, we establish several important properties of $J(\pi)$ and $V(c)$.

\begin{lemma}\label{lem:optimal-pi-existence}
For every $c\in\mathcal C$, there exists $\pi\in\mathcal M_2(\mu)$ such that
\[
V(c)=J(\pi).
\]
\end{lemma}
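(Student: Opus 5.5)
We follow the direct method. Fix $c\in\mathcal C$ and take a maximizing sequence $\pi_n\in\mathcal M_2(\mu)$ with $\overline C(\pi_n)=c$ and $J(\pi_n)\to V(c)$. By Lemma~\ref{lem:square} and since $c\in\mathcal C$, we have $V(c)\in(-\infty,\tfrac12]$, so we may assume $J(\pi_n)\ge V(c)-1$. The key step is a uniform second-moment bound. Writing $J(\pi_n)=\tfrac12-\tfrac12\int W_2^2(\pi_n^x,\gamma)\,\mu(dx)$, we get $\int W_2^2(\pi_n^x,\gamma)\,\mu(dx)\le 1-2V(c)+2$. Then $\int y^2\pi_n^x(dy)\le 2W_2^2(\pi_n^x,\gamma)+2$, and integrating in $\mu$ bounds $\int y^2\,d\pi_n$ uniformly in $n$. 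Hence $\{\pi_n\}$ is tight with uniformly bounded second moments, and by Prokhorov a subsequence converges weakly to some $\pi$ with first marginal $\mu$.

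Next we verify that the limit is admissible. The second moment of $\pi$ is finite by lower semicontinuity. For the constraints, the functions $(y-K_i)_+$ grow linearly, so uniform second moments give uniform integrability and $\overline C(\pi)=c$. For the martingale property, the condition is equivalent to $\int f(x)(y-x)\,d\pi_n=0$ for all bounded continuous $f$. This passes to the limit by the same uniform integrability, so $\pi\in\mathcal M_2(\mu)$.

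The main obstacle is upper semicontinuity of $J$, that is, lower semicontinuity of $\pi\mapsto\int W_2^2(\pi^x,\gamma)\,\mu(dx)$, along a sequence whose first marginal is fixed. Weak convergence of $\pi_n$ does not give convergence of the kernels, but $\rho\mapsto W_2^2(\rho,\gamma)$ is convex and weakly lower semicontinuous. This is exactly the setting of weak transport costs (Backhoff--Beiglb\"ock--Pammer--Huesmann type results), where $\pi\mapsto\int C(x,\pi^x)\,\mu(dx)$ is lsc for convex lsc $C$ bounded below. I would try to avoid citing this by hand. The plan is to use Mazur/Koml\'os: replace $\pi_n$ by convex combinations $\tilde\pi_n$ of the tail. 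These remain in the constraint set, since the set is convex with fixed first marginal, and $J(\tilde\pi_n)\ge\min_{k\ge n}J(\pi_k)$ by concavity of $J$, which follows from convexity of $W_2^2(\cdot,\gamma)$ in the kernel. One then argues that the kernels $\tilde\pi_n^x$ converge weakly to $\pi^x$ for $\mu$-a.e.\ $x$, possibly after passing to the adapted-weak topology, and applies Fatou together with pointwise lsc of $W_2^2(\cdot,\gamma)$ under weak convergence plus the bounded moments.

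An alternative, possibly cleaner route is the dual representation
\[
W_2^2(\rho,\gamma)=\sup_{\varphi}\Big\{\int\varphi\,d\rho+\int\varphi^c\,d\gamma\Big\}.
\]
Hence $\int W_2^2(\pi^x,\gamma)\,\mu(dx)$ is a supremum over measurable families $\varphi_x$ of $\int\varphi_x(y)\,\pi(dx,dy)+\int\!\!\int\varphi_x^c\,d\gamma\,\mu(dx)$. Restricting to $\varphi$ continuous in $(x,y)$ with quadratic growth by a density/measurable-selection argument, each such term is continuous along $\pi_n$ thanks to the uniform integrability above. A supremum of continuous functionals is lsc. Combining, $J(\pi)\ge\limsup J(\pi_n)=V(c)$, and admissibility gives equality.
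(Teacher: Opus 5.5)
\textbf{There is a genuine gap, in the upper semicontinuity step.} Everything up to a limit $\pi\in\mathcal M_2(\mu)$ with $\overline C(\pi)=c$ matches the paper: the maximizing sequence, the bound $\int y^2\,d\pi_n\le 2I(\pi_n)+2$, Prokhorov, and passing the first marginal, the martingale condition and the call constraints to the limit. The trouble is the step you yourself call the main obstacle. Neither of your two routes actually proves $I(\pi)\le\liminf_n I(\pi_n)$ for $I(\eta)=\int W_2^2(\eta^x,\gamma)\,\mu(dx)$, and that is the only nontrivial part of the lemma beyond compactness.

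\emph{Mazur route.} The $\mu$-a.e.\ weak convergence $\tilde\pi_n^x\to\pi^x$ is asserted, not argued; ``possibly after passing to the adapted-weak topology'' does not supply it. It can be supplied as follows:
\begin{itemize}
\item For $f$ in a countable dense subset of $C_0(\mathbb R)$, the maps $x\mapsto\int f\,d\pi_n^x$ converge weakly in $L^2(\mu)$ to $x\mapsto\int f\,d\pi^x$. Test against $h\in C_b(\mathbb R)$ and use $\pi_n\Rightarrow\pi$.
\item Mazur in a weighted direct sum of copies of $L^2(\mu)$, plus a diagonal extraction, gives common convex combinations converging $\mu$-a.e.\ for all such $f$.
\item Vague convergence of probability measures to a probability measure is weak convergence.
\item Fatou and the weak lower semicontinuity of $W_2^2(\cdot,\gamma)$ then finish.
\end{itemize}
None of this is in your proposal.

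\emph{Dual route.} This one contains an actual error. A uniform second-moment bound gives uniform integrability only for sub-quadratic growth. So $\int\varphi\,d\pi_n$ need not converge for continuous $\varphi$ of quadratic growth. For example, take $\mu=\delta_0$ and $\pi_n^0=(1-\tfrac1n)\delta_0+\tfrac1{2n}(\delta_{\sqrt n}+\delta_{-\sqrt n})$. Then $\pi_n\Rightarrow\delta_{(0,0)}$, but $\int y^2\,d\pi_n=1$ for every $n$. You would have to use bounded test functions. Even then, the claim that a supremum over jointly continuous $\varphi(x,y)$ still recovers $I(\pi)$ is a duality theorem in its own right, not a density or measurable-selection remark.

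\textbf{How the paper handles it.} The paper avoids kernel convergence entirely by lifting to $\mathbb R^3$:
$I(\eta)=\inf\{\int (y-z)^2\,dQ:\ Q\in\mathcal P(\mathbb R^3),\ Q_{x,y}=\eta,\ Q_{x,z}=\mu\otimes\gamma\}$.
This holds because disintegrating in $x$ gives $Q^x\in\Pi(\eta^x,\gamma)$, and conversely one glues optimal couplings chosen measurably in $x$. The constraint $Q_{x,z}=\mu\otimes\gamma$ (that is, $z$ independent of $x$) is stable under weak limits. Hence near-optimal $Q_n$ for $\pi_n$ are tight, since their second moments in $x,y,z$ are bounded, and every limit point lies in $\mathcal Q(\pi)$. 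The Portmanteau inequality for the nonnegative continuous cost $(y-z)^2$ then gives $I(\pi)\le\liminf_n I(\pi_n)$ in two lines.

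Your dual route, done correctly, is exactly the Kantorovich dual of this lifted problem. Its cost is $(y-z)^2$ on $\{x=x'\}$ and $+\infty$ off it, which is lower semicontinuous, so bounded continuous potentials suffice and each dual functional is continuous in $\pi$. Alternatively, you could simply cite the weak-transport lower semicontinuity theorem you allude to. It covers costs $C(x,\rho)$ that are lower semicontinuous, bounded below and convex in $\rho$, which includes $C(x,\rho)=W_2^2(\rho,\gamma)$. Either would close the gap.
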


\begin{proof}
Set $\mathcal A_c:=\{\pi\in\mathcal M_2(\mu):  \overline C(\pi)=c\}$. Since $c\in\mathcal C$, the set $\mathcal A_c$ is nonempty. By Lemma~\ref{lem:square}, maximizing $J$ over $\mathcal A_c$ is equivalent to minimizing functional $I(\pi):=\int W_2^2(\pi^x,\gamma)\,\mu(dx)$ over $\mathcal A_c$. Denote
\[
\alpha:=\inf_{\pi\in\mathcal A_c} I(\pi).
\]
Since $\mathcal A_c\ne\varnothing$ and $I(\pi)<\infty$ for any $\pi\in\mathcal M_2(\mu)$, we have $0\leq\alpha<\infty$.

Let $\{\pi_n\}\subset\mathcal A_c$ be a minimizing sequence: $I(\pi_n)\downarrow\alpha$. For each $\rho\in\mathcal P_2(\mathbb R)$
\[
\int y^2\,\rho(dy) \leq 2\,W_2^2(\rho,\gamma) + 2.
\]
Applying this estimate to $\rho=\pi_n^x$ for every $n$ and integrating with respect to $\mu$, we obtain
\[
\int y^2\,\pi_n(dx,dy) \leq 2\, I(\pi_n)+2 < \infty.
\]
The first marginal of $\pi_n$ is $\mu$, and hence
\begin{equation}\label{eq:square-finite}
\sup_n\int (x^2+y^2)\,\pi_n(dx,dy)<\infty.
\end{equation}
In particular, 
the family $\{\pi_n\}$ is uniformly tight. By Prokhorov's theorem, there exist a subsequence $\{\pi_{n_k}\}$, which, for simplicity, we continue to denote by $\{\pi_n\}$, and a probability measure $\pi$ such that
\[
\pi_n\Rightarrow\pi.
\]
Moreover, $\pi\in\mathcal P_2(\mathbb R^2)$ since
\[
\int (x^2 + y^2)\,\pi(dx,dy) \leq \liminf_{n\to\infty} \int (x^2 + y^2)\,\pi_n(dx,dy) <\infty.
\]

We now show that $\pi\in\mathcal A_c$. The first marginal of $\pi$ is $\mu$ since, for every $\varphi\in C_b(\mathbb R)$,
\[
\int \varphi(x)\,\pi(dx,dy) = \lim_{n\to\infty}\int \varphi(x)\,\pi_n(dx,dy) = \int \varphi(x)\,\mu(dx).
\]
Next, the uniform boundedness of the second moments in \eqref{eq:square-finite} implies 
uniform integrability with respect to $\{\pi_n\}$ of any function of at most linear growth. Consequently, for every $\varphi\in C_b(\mathbb R)$, we may pass to the limit in
\[
\int \varphi(x)(y-x)\,\pi(dx,dy) = \lim_{n \to \infty}\int \varphi(x)(y-x)\,\pi_n(dx,dy)=0.
\]
Since the first marginal of $\pi$ is $\mu$, this is equivalent to the martingale condition
\[
\int y\,\pi^x(dy)=x \qquad \mu\text{-a.s.}
\]
Similarly, since the functions $y \mapsto (y-K_i)_+$ are continuous and have linear growth,
\[
\int (y-K_i)_+\,\pi(dx,dy) = \lim_{n\to\infty}\int (y-K_i)_+\,\pi_n(dx,dy) = c_i, \qquad i=1,\dots,N.
\]
Therefore, $\pi\in\mathcal M_2(\mu)$ and $  \overline C(\pi)=c$, that is, $\pi\in\mathcal A_c$.

It remains to prove the lower semicontinuity of $I$. For a measure $\eta \in \mathcal P_2(\mathbb R^2)$ with first marginal $\mu$, set
\[
\mathcal Q(\eta):= \left\{ Q\in\mathcal P(\mathbb R^3): Q_{x,y}=\eta,\; Q_{x,z}=\mu\otimes\gamma
\right\}.
\]
Then
\[
I(\eta) = \inf_{Q\in\mathcal Q(\eta)} \int (y-z)^2\,Q(dx,dy,dz).
\]

\noindent
For each $n$, choose $Q_n\in\mathcal Q(\pi_n)$ such that
\[
\int (y-z)^2\,d Q_n \leq I(\pi_n)+\frac1n.
\]
Since $(Q_n)_{x,z}=\mu\otimes\gamma$ and $\int y^2\,dQ_n=\int y^2\,d\pi_n$, we obtain
\[
\sup_n\int (x^2+y^2+z^2)\,dQ_n<\infty.
\]
Hence, the family $\{Q_n\}$ is uniformly tight. By Prokhorov's theorem, after passing to a subsequence if necessary, we may assume that $Q_n\Rightarrow Q$ for some $Q\in\mathcal P(\mathbb R^3)$. Taking the limits in the marginals, we get
\[
Q_{x,y}=\pi, \qquad Q_{x,z}=\mu\otimes\gamma,
\]
so that $Q\in\mathcal Q(\pi)$. Since the function $(x,y,z)\mapsto (y-z)^2$
is continuous and nonnegative,
\[
\int (y-z)^2\,dQ \leq \liminf_{n\to\infty}\int (y-z)^2\,dQ_n.
\]
Therefore,
\[
I(\pi) \leq \int (y-z)^2\,dQ \leq \liminf_{n\to\infty}\left(I(\pi_n)+\frac1n \right) = \alpha.
\]
Since $\pi\in\mathcal A_c$, the definition of $\alpha$ also gives
$I(\pi)\geq\alpha$. Hence, $I(\pi)=\alpha$.
\end{proof}

\begin{lemma}\label{lem:concave}
The function $V$ is strictly concave on $\mathcal C$.
\end{lemma}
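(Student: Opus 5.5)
The plan is to prove concavity by mixing optimal couplings, and strictness by the strict convexity of $\rho\mapsto W_2^2(\rho,\gamma)$, which holds because $\gamma$ is absolutely continuous. Fix $c^0,c^1\in\mathcal C$ with $c^0\neq c^1$ and $t\in(0,1)$, and set $c^t:=(1-t)c^0+tc^1$. By Lemma~\ref{lem:optimal-pi-existence} choose $\pi_0,\pi_1\in\mathcal M_2(\mu)$ with $\overline C(\pi_j)=c^j$ and $J(\pi_j)=V(c^j)$. Put $\pi_t:=(1-t)\pi_0+t\pi_1$.

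\textbf{Feasibility of the mixture.} Both couplings have first marginal $\mu$, so the disintegration of $\pi_t$ is $\pi_t^x=(1-t)\pi_0^x+t\pi_1^x$ for $\mu$-a.e.\ $x$. Hence the martingale condition and square integrability are preserved, and $\pi_t\in\mathcal M_2(\mu)$. Since $\overline C$ is linear in $\pi$, we get $\overline C(\pi_t)=c^t$. In particular $c^t\in\mathcal C$ (so $\mathcal C$ is convex), and $V(c^t)\ge J(\pi_t)$.

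\textbf{Concavity of $J$ along the mixture.} By Lemma~\ref{lem:square},
\[
J(\pi)=\tfrac12-\tfrac12\int W_2^2(\pi^x,\gamma)\,\mu(dx).
\]
It therefore suffices to show that $\rho\mapsto W_2^2(\rho,\gamma)$ is convex on $\mathcal P_2(\mathbb R)$, and strictly convex. Convexity is standard. Take optimal $q_j\in\Pi(\rho_j,\gamma)$; then $(1-t)q_0+tq_1\in\Pi((1-t)\rho_0+t\rho_1,\gamma)$, and the cost is linear in the coupling, so
\[
W_2^2\bigl((1-t)\rho_0+t\rho_1,\gamma\bigr)\le (1-t)W_2^2(\rho_0,\gamma)+tW_2^2(\rho_1,\gamma).
\]
Applying this pointwise in $x$ and integrating against $\mu$ gives
\[
V(c^t)\ge J(\pi_t)\ge (1-t)V(c^0)+tV(c^1),
\]
which is concavity of $V$.

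\textbf{Strictness (the main obstacle).} Suppose equality holds in the last display. Then for $\mu$-a.e.\ $x$ the convexity inequality for $W_2^2(\cdot,\gamma)$ is an equality at $\rho_j=\pi_j^x$. Consequently the mixture $(1-t)q_0^x+tq_1^x$ of the optimal couplings is itself optimal between $\pi_t^x$ and $\gamma$.

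Because $\gamma$ is absolutely continuous, Brenier's theorem applies in dimension one. Every optimal coupling with $\gamma$ is induced by the monotone map from the $\gamma$-side: $q_j^x=(T_j^x,\mathrm{id})_\#\gamma$ with $T_j^x=F_{\pi_j^x}^{-1}\circ F_\gamma$. The mixture is optimal, so it must also be concentrated on the graph of a single map over $z$. But it is concentrated on the union of the graphs of $T_0^x$ and $T_1^x$, each charged with positive weight. This forces $T_0^x=T_1^x$ $\gamma$-a.e., hence $\pi_0^x=\pi_1^x$.

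Since this holds for $\mu$-a.e.\ $x$, we get $\pi_0=\pi_1$ and thus $c^0=\overline C(\pi_0)=\overline C(\pi_1)=c^1$, a contradiction. Hence the inequality is strict whenever $c^0\ne c^1$.

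The delicate point is justifying the uniqueness and map structure of the optimal coupling for each $x$. In one dimension this reduces to the quantile representation; alternatively one can cite the strict convexity statement for $W_2^2(\cdot,\gamma)$ with absolutely continuous $\gamma$ (e.g.\ Santambrogio's Proposition~7.19). The remaining steps are measurability details, which follow from the measurable selection results already invoked in the paper.
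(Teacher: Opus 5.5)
Your proposal is correct and follows essentially the same route as the paper: you mix the optimizers given by Lemma~\ref{lem:optimal-pi-existence}, and you derive strictness from the strict convexity of $\rho\mapsto W_2^2(\rho,\gamma)$ via Brenier's theorem, since an optimal mixture of two map-induced couplings must itself be map-induced, which forces the two maps to agree. The only difference is cosmetic: you argue by contradiction from equality in $V$, whereas the paper first proves strict concavity of $J$ on $\mathcal M_2(\mu)$ and then applies it to the two optimizers.
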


\begin{proof}
We first show that the functional $F(\rho) := W_2^2(\rho, \gamma)$ is strictly convex. Let $\rho_0$ and $\rho_1 \in \mathcal{P}_2(\mathbb R)$. It is well known that
\[
F(t\rho_0 + (1-t) \rho_1) \leq t F(\rho_0) + (1-t) F(\rho_1), \qquad t \in (0, 1),
\]
so it remains to show that equality cannot hold when $\rho_0\ne\rho_1$. Suppose to the contrary that for some $t \in (0, 1)$,
\begin{equation}\label{eq:eq-W}
W_2^2(\rho_t, \gamma) = t W_2^2(\rho_0, \gamma) + (1-t) W_2^2(\rho_1, \gamma),
\end{equation}
where $\rho_t = t \rho_0 + (1-t) \rho_1$. Since $\gamma$ is absolutely continuous, Brenier's theorem yields unique maps $R_k$, $k \in \{0, 1\}$, such that
\[
W_2^2(\rho_k, \gamma) = \inf_{\pi \in \Pi(\rho_k, \gamma)} \int (y-z)^2\, \pi(dy, dz) = \int (R_k(z) - z)^2 \, \gamma(dz).
\]
Thus, the corresponding optimal couplings are $\pi_k = (R_k, \text{id})_\# \gamma$. Set $\pi_t := t \pi_0 + (1-t) \pi_1 \in \Pi\left(\rho_t, \gamma\right)$. By \eqref{eq:eq-W}, this coupling is optimal for $(\rho_t, \gamma)$ and is therefore induced by some map $R_t$. However, by construction, its disintegration with respect to $z$ satisfies
\[
\pi_t^z(dy) = t \delta_{R_0(z)}(dy) + (1-t) \delta_{R_1(z)}(dy) = \delta_{R_t(z)}(dy), \qquad \gamma\text{-a.s.},
\]
which is possible only if $R_0 \overset{\text{\tiny{a.s.}}}{\equiv} R_t \overset{\text{\tiny{a.s.}}}{\equiv} R_1$. Consequently, $\rho_0$ and $\rho_1$ coincide.

We now show that $J(\pi):\mathcal M_2(\mu)\to\mathbb R$ is strictly concave. Let $\pi^0,\pi^1\in\mathcal M_2(\mu)$ and $t\in(0,1)$. Then
\[
(t\pi^0+(1-t)\pi^1)^x=t\pi^{0,x}+(1-t)\pi^{1,x} \qquad \mu\text{-a.s.}
\]
Since the map $\rho\longmapsto W_2^2(\rho,\gamma)$ is strictly convex on $\mathcal P_2(\mathbb R)$, on the set $\{x:\;\pi^{0,x} \ne \pi^{1,x}\}$ of positive $\mu$-measure we have
\[
W_2^2\bigl((t\pi^0+(1-t)\pi^1)^x,\gamma\bigr) < t W_2^2(\pi^{0,x},\gamma)+(1-t) W_2^2(\pi^{1,x},\gamma).
\]
Integrating with respect to $\mu$ and using Lemma~\ref{lem:square}, we obtain the strict concavity of $J$:
\[
J(t\pi^0+(1-t)\pi^1) > t J(\pi^0)+(1-t) J(\pi^1).
\]

Let $c^0, c^1\in\mathcal C$ be distinct call-moment vectors and let $t \in (0,1)$. By Lemma~\ref{lem:optimal-pi-existence}, we may choose $\pi^0,\pi^1\in\mathcal M_2(\mu)$ such that
\[
  \overline C(\pi^k)=c^k, \qquad J(\pi^k) = V(c^k),\qquad k=0,1.
\]
Then $t\pi^0+(1-t)\pi^1\in\mathcal M_2(\mu)$, and
\[
  \overline C(t\pi^0+(1-t)\pi^1) = t c^0 + (1-t) c^1.
\]
By the definition of $V(c)$ and strict concavity of $J$,
\[
V \big(tc^0+(1-t)c^1 \big) \geq J\big(t\pi^0+(1-t)\pi^1\big) > tJ(\pi^0)+(1-t)J(\pi^1) = tV(c^0)+(1-t)V(c^1).
\]
\end{proof}

\begin{remark}
    The proof also shows that $\mathcal M_2(\mu)$ is convex. Since $\pi \longmapsto \overline C(\pi)$ is linear, the set $\mathcal{C}$ is convex as well. Moreover, the strict concavity of $J$ implies the uniqueness of the optimal coupling in problem \eqref{eq:main-problem} for each $c \in \mathcal C$.
\end{remark}

\begin{lemma}\label{lem:penalized}
For every $\lambda\in\mathbb R^N$,
\begin{equation}\label{eq:penalized-value}
\sup_{\pi\in\mathcal M_2(\mu)}\Big\{J(\pi)-\lambda\cdot \overline C(\pi)\Big\} = -\int \Psi_\lambda^*(x)\,\mu(dx).
\end{equation}
\end{lemma}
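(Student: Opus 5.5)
We prove the two inequalities in \eqref{eq:penalized-value} separately. Since $\lambda\cdot\overline C(\pi)=\int\Lambda_\lambda(y)\,\pi(dx,dy)$, the penalized functional can be written as $\int G(\pi^x)\,\mu(dx)$, where for $\rho\in\mathcal P_2(\mathbb R)$
\[
G(\rho):=\operatorname{MC}(\rho,\gamma)-\int\Bigl(\tfrac12y^2+\Lambda_\lambda(y)\Bigr)\rho(dy).
\]
The problem therefore decouples pointwise in $x$. It suffices to show that for every $x$
\[
\sup\Bigl\{G(\rho):\rho\in\mathcal P_2(\mathbb R),\ \int y\,\rho(dy)=x\Bigr\}=-\Psi_\lambda^*(x),
\]
and then to glue the pointwise optimizers measurably.

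\emph{Upper bound.} Fix $\rho$ with mean $x$, a coupling $q\in\Pi(\rho,\gamma)$ and any $a\in\mathbb R$. For $q$-a.e. $(y,z)$ the definition of $H_\lambda$ with $u=a+z$ gives
\[
yz-\tfrac12y^2-\Lambda_\lambda(y)\le H_\lambda(a+z)-ay.
\]
Integrating with respect to $q$ and using $\int y\,d\rho=x$ yields
\[
\int yz\,dq-\int\bigl(\tfrac12y^2+\Lambda_\lambda\bigr)d\rho\le\Psi_\lambda(a)-ax .
\]
Taking the supremum over $q$ and the infimum over $a$ gives $G(\rho)\le-\Psi_\lambda^*(x)$. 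All integrals are finite because $\rho,\gamma\in\mathcal P_2$ and by \eqref{eq:H-growth}. Integrating in $\mu$ gives the inequality "$\le$" in \eqref{eq:penalized-value}.

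\emph{Lower bound, by construction of an optimizer.} For each $x$, Lemma~\ref{lem:Psi-properties} provides a maximizer $a(x)$ of $a\mapsto ax-\Psi_\lambda(a)$. The first-order condition reads $x=\Psi_\lambda'(a(x))=\int T_\lambda(a(x)+z)\,\gamma(dz)$ by \eqref{eq:Psi-derivative}. Define
\[
\rho_x:=\bigl(z\mapsto T_\lambda(a(x)+z)\bigr)_\#\gamma .
\]
This measure has mean $x$, and it lies in $\mathcal P_2$ by \eqref{eq:T-growth}. Take the coupling $q=(T_\lambda(a(x)+\cdot),\mathrm{id})_\#\gamma$. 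Since $T_\lambda(u)\in\Gamma_\lambda(u)$, the pointwise inequality above becomes an equality for $u=a(x)+z$, so
\[
G(\rho_x)\ge\Psi_\lambda(a(x))-a(x)x=-\Psi_\lambda^*(x).
\]
(In fact $q$ is optimal, being monotone, but only the inequality is needed.)

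For measurability, choose $a(x)$ as the smallest maximizer. Since $\Psi_\lambda$ is convex and $C^1$, this is $a(x)=\inf\{a:\Psi_\lambda'(a)\ge x\}$, a monotone and hence Borel function of $x$. Then $\pi(dx,dy):=\rho_x(dy)\mu(dx)$ is a well-defined kernel. It is a martingale coupling, and it is square integrable because $\int y^2\rho_x(dy)\le 2(|a(x)|+L_\lambda)^2+2$. By \eqref{eq:Psi-prime-growth}, $|a(x)-x|\le L_\lambda$, so $a(x)$ grows linearly and $\pi\in\mathcal M_2(\mu)$. Integrating $G(\rho_x)$ in $\mu$ gives "$\ge$".

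The main obstacle is the lower bound: ensuring that the first-order condition produces a kernel with the exact mean $x$. This works because $\Psi_\lambda$ is $C^1$, which relies on $\gamma$ being absolutely continuous. The other delicate points are the measurable selection of $a(x)$ and the integrability controlled by the linear bounds, and these are routine by the estimates above.
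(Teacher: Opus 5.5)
Your proposal is correct and follows the paper's proof essentially step for step. The upper bound comes from the pointwise inequality defining $H_\lambda$ at $u=a+z$, integrated against a coupling and the martingale condition. The lower bound uses the kernel $(T_\lambda(a(x)+\cdot))_\#\gamma$ with $a(x)=\inf\{a:\Psi_\lambda'(a)\ge x\}$, and obtains square integrability from $|a(x)-x|\le L_\lambda$. The only cosmetic difference is that you take the infimum over $a$ in the upper bound instead of substituting the maximizer $a_\lambda(x)$ directly.
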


\begin{proof}
Fix $\lambda\in\mathbb R^N$.
By Lemma \ref{lem:Psi-properties}, $\Psi_\lambda'$ is continuous and non-decreasing. Surjectivity of $\Psi_\lambda'$ follows from \eqref{eq:Psi-prime-growth}. We may therefore define a Borel function
\[
a_\lambda(x):=\inf\{a\in\mathbb R:\ \Psi_\lambda'(a)\geq x\},\qquad x\in\mathbb R.
\]
By continuity of $\Psi_\lambda'$,
\begin{equation}\label{eq:right-inverse}
\Psi_\lambda'(a_\lambda(x))=x,\qquad x\in\mathbb R.
\end{equation}
Thus, $a_\lambda$ is a generalized inverse of $\Psi_\lambda'$. In particular, for each $x\in\mathbb R$, $a_\lambda(x)$ maximizes
\[
a\longmapsto ax - \Psi_\lambda(a),
\]
and hence
\begin{equation*}
\Psi_\lambda(a_\lambda(x))-a_\lambda(x)x=-\Psi_\lambda^*(x).
\end{equation*}
Let $\pi\in\mathcal M_2(\mu)$. For $\mu$-a.e. $x$ and any $q \in \Pi(\pi^x,\gamma)$, we have
\[
yz-\frac12 y^2-\Lambda_\lambda(y) = \Bigl(y(z+a_\lambda(x))-\frac12 y^2-\Lambda_\lambda(y)\Bigr) - a_\lambda(x)y \leq H_\lambda(z+a_\lambda(x)) - a_\lambda(x)y.
\]
Integrating with respect to $q$ and using the martingale condition $\int y\,\pi^x(dy)=x$, we obtain
\[
\int \Bigl(yz-\frac12 y^2-\Lambda_\lambda(y)\Bigr)\,q(dy,dz) \leq \Psi_\lambda(a_\lambda(x))-a_\lambda(x)x = -\Psi_\lambda^*(x).
\]
Taking the supremum over $q\in\Pi(\pi^x,\gamma)$ and integrating with respect to $\mu$, we get
\[
J(\pi)-\lambda\cdot \overline C(\pi) \leq -\int \Psi_\lambda^*(x)\,\mu(dx).
\]
Since $\pi\in\mathcal M_2(\mu)$ is arbitrary,
\begin{equation}\label{eq:upper-penalized}
\sup_{\pi\in\mathcal M_2(\mu)}\Big\{J(\pi)-\lambda\cdot \overline C(\pi)\Big\} \leq -\int \Psi_\lambda^*(x)\,\mu(dx).
\end{equation}

We now prove the reverse inequality. Define a Borel function
\[
Y_x(z):=T_\lambda(a_\lambda(x)+z)
\]
for $(x, z) \in \mathbb R^2$, and set
\[
\rho_\lambda^x:=(Y_x)_\#\gamma,
\qquad
q_\lambda^x(dy,dz):=\delta_{Y_x(z)}(dy)\,\gamma(dz).
\]
Then $q_\lambda^x \in \Pi(\rho_\lambda^x,\gamma)$ for every $x \in \mathbb R$. Let $\pi_\lambda(dx,dy):=\mu(dx)\rho_\lambda^x(dy)$. We show that $\pi_\lambda \in \mathcal M_2(\mu)$. By \eqref{eq:Psi-derivative} and \eqref{eq:right-inverse}, for every $x \in \mathbb R$,
\[
\int y\,\rho_\lambda^x(dy) = \int T_\lambda(a_\lambda(x)+z)\,\gamma(dz) = \Psi_\lambda'(a_\lambda(x)) = x.
\]
Thus, the martingale condition holds. 
It remains to verify square integrability. From \eqref{eq:Psi-prime-growth} and \eqref{eq:right-inverse},
\[
|a_\lambda(x)-x|\leq L_\lambda.
\]
Combining this with \eqref{eq:T-growth}, we get
\[
|Y_x(z)| \leq |a_\lambda(x)+z| + L_\lambda \leq |x|+|z|+2L_\lambda.
\]
Since $\mu,\gamma\in\mathcal P_2(\mathbb R)$,
\[
\int y^2\,\pi_\lambda(dx,dy)<\infty.
\]
Hence, $\pi_\lambda\in \mathcal M_2(\mu)$.

Fix $x\in\mathbb R$. Since
\[
Y_x(z)\in \Gamma_\lambda(a_\lambda(x)+z) \qquad \text{for all } z\in\mathbb R,
\]
we have
\[
H_\lambda(a_\lambda(x)+z) = Y_x(z)(a_\lambda(x)+z) -\frac12 Y_x(z)^2-\Lambda_\lambda(Y_x(z)).
\]
Integrating with respect to $\gamma$, we obtain
\begin{align*}
    \Psi_\lambda(a_\lambda(x)) &= a_\lambda(x)\int Y_x(z)\,\gamma(dz) + \int \Bigl(yz-\frac12 y^2 -\Lambda_\lambda(y)\Bigr)\,q_\lambda^x(dy,dz) \\
    &= a_\lambda(x)\, x + \int \Bigl(yz-\frac12 y^2-\Lambda_\lambda(y)\Bigr)\,q_\lambda^x(dy,dz),
\end{align*}
using the martingale condition. Therefore,
\[
\int \Bigl(yz-\frac12 y^2-\Lambda_\lambda(y)\Bigr)\,q_\lambda^x(dy,dz) = \Psi_\lambda(a_\lambda(x))-a_\lambda(x)\,x = -\Psi_\lambda^*(x).
\]
Since $q_\lambda^x\in\Pi(\rho_\lambda^x,\gamma)$, it follows that
\[
\sup_{q\in\Pi(\rho_\lambda^x,\gamma)} \int \Bigl(yz-\frac12 y^2-\Lambda_\lambda(y)\Bigr)\,q(dy,dz) \geq -\Psi_\lambda^*(x).
\]
Integrating with respect to $\mu$ and evaluating the supremum at $\pi_\lambda$ gives
\[
\sup_{\pi\in\mathcal M_2(\mu)}\Big\{J(\pi)-\lambda\cdot \overline C(\pi)\Big\} \geq -\int \Psi_\lambda^*(x)\,\mu(dx).
\]
Together with \eqref{eq:upper-penalized}, this proves \eqref{eq:penalized-value}.
\end{proof}

\begin{proof}[Proof of Theorem~\ref{thm:main}]
Fix $\lambda\in\mathbb R^N$ and $\pi\in\mathcal M_2(\mu)$ such that $\overline C(\pi)=c$. Then
\[
J(\pi)=J(\pi)-\lambda\cdot \overline C(\pi)+\lambda\cdot c \leq \sup_{\tilde\pi\in\mathcal M_2(\mu)}\{J(\tilde\pi)-\lambda\cdot \overline C(\tilde\pi)\}+\lambda\cdot c.
\]
By Lemma~\ref{lem:penalized},
\[
J(\pi)\leq \lambda\cdot c-\int \Psi_\lambda^*(x)\,\mu(dx).
\]
Taking the supremum over $\pi$ with $\overline C(\pi)=c$ gives
\[
V(c)\leq \lambda\cdot c-\int \Psi_\lambda^*(x)\,\mu(dx).
\]
Taking the infimum over $\lambda\in\mathbb R^N$ proves \eqref{eq:weak-dual}.

Suppose now that $c \in \operatorname{int}(\mathcal C)$. By Lemma~\ref{lem:concave}, $V$ is concave on convex $\mathcal C$, and by Lemma~\ref{lem:square}, it is finite and bounded above by $1/2$. Extend $-V$ to $\mathbb R^N$ by setting
\[
\widetilde V(m):=
\begin{cases}
-V(m), & m\in\mathcal C,\\
+\infty, & m\notin\mathcal C.
\end{cases}
\]
Since $\operatorname{ri}(\mathcal C) = \operatorname{int}(\mathcal C)$, we have $c \in \operatorname{ri}(\operatorname{dom}\widetilde V)$. Hence, by Rockafellar's subdifferentiability theorem \cite[Theorem 23.4]{Rockafellar1970}, there exists $p \in \partial\widetilde V(c)$. Set $\lambda:=-p$. By definition of the subgradient,
\[
\widetilde V(m)\geq \widetilde V(c)+p\cdot (m-c),\qquad m\in\mathbb R^N.
\]
For $m\in\mathcal C$, this is equivalent to
\begin{equation*}
V(m)\leq V(c)+\lambda\cdot (m-c),\qquad m\in\mathcal C.
\end{equation*}
Therefore,
\[
V(c) \geq \lambda\cdot c+\sup_{m\in\mathcal C}\big(V(m)-\lambda\cdot m\big).
\]
By definition of $V$ and Lemma \ref{lem:penalized},
\[
\sup_{m\in\mathcal C}\big(V(m)-\lambda\cdot m\big) = \sup_{\pi\in\mathcal M_2(\mu)}\big(J(\pi)-\lambda\cdot \overline C(\pi)\big) = -\int \Psi_\lambda^*(x)\,\mu(dx).
\]
Consequently,
\[
V(c)\geq \lambda\cdot c-\int \Psi_\lambda^*(x)\,\mu(dx).
\]
Together with \eqref{eq:weak-dual}, this yields equality and proves \eqref{eq:strong-dual}.
\end{proof}

This proof also shows that the optimal value of the dual problem is attained by some $\lambda^*$, allowing us to explicitly construct an optimal coupling $\pi^*$ for the primal problem.

From now on, we assume that a vector $c \in \operatorname{int}(\mathcal C)$ is fixed.

\begin{corollary}\label{cor:penalized-optimality}
Let $\lambda^*\in\mathbb R^N$ be a dual optimizer in \eqref{eq:strong-dual}. Then the coupling
\[
\pi_{\lambda^*}(dx,dy) = \rho_{\lambda^*}^x(dy)\,\mu(dx) = (Y_x)_\#\gamma(dy)\,\mu(dx)
\]
constructed from $\lambda^*$ is optimal for the primal problem, that is,
\[
V(c)=J(\pi_{\lambda^*}), \qquad \overline C(\pi_{\lambda^*})=c.
\]
\end{corollary}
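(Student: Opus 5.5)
The plan is to show that $\pi_{\lambda^*}$ maximizes the penalized functional $J-\lambda^*\cdot\overline C$ over all of $\mathcal M_2(\mu)$. Strict concavity of $V$ then forces its call-moment vector to be exactly $c$.

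\textbf{Step 1: $\pi_{\lambda^*}$ attains the penalized supremum.} In the proof of Lemma~\ref{lem:penalized} (with $\lambda=\lambda^*$) we checked that $\pi_{\lambda^*}\in\mathcal M_2(\mu)$. We also showed that for every $x$ the explicit coupling $q_{\lambda^*}^x\in\Pi(\rho^x_{\lambda^*},\gamma)$ satisfies
\[
\int\Bigl(yz-\tfrac12y^2-\Lambda_{\lambda^*}(y)\Bigr)\,q^x_{\lambda^*}(dy,dz)=-\Psi^*_{\lambda^*}(x).
\]
Since $\operatorname{MC}(\rho^x_{\lambda^*},\gamma)$ dominates the value of the cross term along any particular coupling, integrating in $\mu$ gives
\[
J(\pi_{\lambda^*})-\lambda^*\cdot\overline C(\pi_{\lambda^*})\ \ge\ -\int\Psi^*_{\lambda^*}\,d\mu .
\]
The reverse inequality is the upper bound \eqref{eq:upper-penalized}. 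Hence equality holds, and $\pi_{\lambda^*}$ is a maximizer in \eqref{eq:penalized-value}.

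\textbf{Step 2: the value function along moment vectors.} Set $m:=\overline C(\pi_{\lambda^*})\in\mathcal C$ and $D:=-\int\Psi^*_{\lambda^*}\,d\mu$. Consider the concave function $G(m'):=V(m')-\lambda^*\cdot m'$ on $\mathcal C$.
\begin{itemize}
\item By the weak duality argument with the fixed $\lambda^*$ (first part of the proof of Theorem~\ref{thm:main}), $G(m')\le D$ for every $m'\in\mathcal C$.
\item Since $\lambda^*$ is a dual optimizer and strong duality \eqref{eq:strong-dual} holds for $c\in\operatorname{int}\mathcal C$, we have $V(c)=\lambda^*\cdot c+D$, that is, $G(c)=D$.
\item By Step 1, $V(m)\ge J(\pi_{\lambda^*})=\lambda^*\cdot m+D$, so $G(m)\ge D$, and therefore $G(m)=D$.
\end{itemize}
Thus both $c$ and $m$ are maximizers of $G$ on the convex set $\mathcal C$.

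\textbf{Step 3: uniqueness of the maximizer.} By Lemma~\ref{lem:concave}, $V$ is strictly concave on $\mathcal C$, and subtracting a linear function preserves this, so $G$ is strictly concave. Suppose $m\ne c$. Then
\[
G\bigl(\tfrac12(m+c)\bigr)>\tfrac12G(m)+\tfrac12G(c)=D,
\]
which contradicts $G\le D$. Hence $m=c$, that is, $\overline C(\pi_{\lambda^*})=c$. Moreover, $J(\pi_{\lambda^*})=\lambda^*\cdot c+D=V(c)$, so $\pi_{\lambda^*}$ is optimal. By the remark after Lemma~\ref{lem:concave}, it is the unique optimal coupling.

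\textbf{Main obstacle.} I expect the delicate point to be Step 1: one must make sure that the lower bound for the penalized functional is attained by the specific kernel $Y_x=T_{\lambda^*}(a_{\lambda^*}(x)+\cdot)$, rather than holding only as a supremum. This is exactly what the second half of the proof of Lemma~\ref{lem:penalized} provides, so I would reuse it verbatim. Once that is in place, the rest is the standard argument that strict concavity of the value function turns dual attainment into primal feasibility.
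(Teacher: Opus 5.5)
Your proposal is correct and matches the paper's proof. Both show that $\pi_{\lambda^*}$ attains the penalized supremum, then combine weak duality at $m=\overline C(\pi_{\lambda^*})$ with strong duality at $c$ to see that $m'\mapsto V(m')-\lambda^*\cdot m'$ is maximized at both $m$ and $c$, and finally use strict concavity of $V$ to force $m=c$. The only difference is cosmetic: you finish with a midpoint argument for uniqueness of the maximizer, whereas the paper says $\lambda^*$ cannot be a common supergradient of a strictly concave function at two distinct points.
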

    
\begin{proof}
By Lemma~\ref{lem:penalized}, for every $\lambda\in\mathbb R^N$ coupling $\pi_\lambda$ is optimal for
\[
\sup_{\pi\in\mathcal M_2(\mu)} \Big\{J(\pi)-\lambda\cdot\overline C(\pi)\Big\}.
\]
Combining this with \eqref{eq:strong-dual} for $\lambda=\lambda^*$ gives
\[
J(\pi_{\lambda^*}) -\lambda^*\cdot\overline C(\pi_{\lambda^*}) = V(c) -\lambda^*\cdot c.
\]
By the definition of $V$, $J(\pi_{\lambda^*}) \leq V( \overline C(\pi_{\lambda^*}))$. On the other hand, since $\overline C(\pi_{\lambda^*}) \in \mathcal C$, the weak duality at $m=\overline C(\pi_{\lambda^*})$ gives
\[
V( \overline C(\pi_{\lambda^*})) - \lambda^* \cdot \overline  C(\pi_{\lambda^*}) \leq - \int \Psi_{\lambda^*}^* d\mu = J(\pi_{\lambda^*}) - \lambda^* \cdot \overline C(\pi_{\lambda^*}),
\]
Therefore, $J(\pi_{\lambda^*}) = V( \overline C(\pi_{\lambda^*}))$, and hence
\begin{equation}\label{eq:V-coincide}
V( \overline C(\pi_{\lambda^*})) - \lambda^* \cdot \overline C(\pi_{\lambda^*}) = V(c) - \lambda^* \cdot c.
\end{equation} 
Since $\lambda^*$ is a dual optimizer, \eqref{eq:strong-dual} and
Lemma~\ref{lem:penalized} yield
\[
V(c) - \lambda^* \cdot c = \sup_{m \in \mathcal C}(V(m) - \lambda^* \cdot m).
\]
Thus,
\[
V(m) \leq V(c) + \lambda^* \cdot(m-c), \qquad \forall\, m \in \mathcal{C},
\]
so $\lambda^*$ is a supergradient of $V$ at $c$. By \eqref{eq:V-coincide}, the same supremum is attained at
$\overline C(\pi_{\lambda^*})$. Hence, $\lambda^*$ is also a supergradient of $V$ at
$\overline C(\pi_{\lambda^*})$. Since $V$ is strictly concave by Lemma~\ref{lem:concave}, it cannot have the same supergradient at two distinct points. Therefore, $c$ coincides with $ \overline C(\pi_{\lambda^*})$. Together with $J(\pi_{\lambda^*}) = V \bigl( \overline C(\pi_{\lambda^*}) \bigr)$, this gives
\[
J(\pi_{\lambda^*})=V(c),
\]
as claimed.
\end{proof}

We now describe the structure of the resulting optimal coupling by explicitly computing $H_\lambda$ and $T_\lambda$. 

\begin{corollary}\label{cor:plan-structure}
Let $\lambda^*$ be a dual optimizer for \eqref{eq:strong-dual}, and choose the corresponding optimal coupling in the form: 
\[
\pi^*(dx,dy)=\mu(dx)\,(T_{\lambda^*}(a_{\lambda^*}(x)+\cdot))_\#\gamma(dy).
\]
Then, for $\mu$-a.e. $x\in\mathbb R$, the conditional measure $\pi^{*,x}$ has the following properties:
    \begin{enumerate}[label={\rm(\arabic*)}]
    \item atoms may occur only at the strike prices $K_1, \ldots, K_N$;
    \item away from the strike prices, it is obtained from $\gamma$ by a piecewise-shift transformation.
\end{enumerate}
\end{corollary}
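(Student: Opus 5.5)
The plan is to compute $T_\lambda=\min\Gamma_\lambda$ explicitly and then read off the structure of $\pi^{*,x}=(T_{\lambda^*}(a_{\lambda^*}(x)+\cdot))_\#\gamma$. Fix $\lambda=\lambda^*$ and set $f_u(y):=uy-\frac12y^2-\Lambda_\lambda(y)$. Put $K_0:=-\infty$ and $K_{N+1}:=+\infty$, and let $\Lambda_j:=\sum_{i\le j}\lambda_i$ be the slope of $\Lambda_\lambda$ on $I_j:=(K_j,K_{j+1})$ for $j=0,\dots,N$. On each $I_j$ the function $f_u$ is a strictly concave quadratic, $uy-\frac12y^2-\Lambda_jy+\text{const}$. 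Its unconstrained maximizer is the shifted point $u-\Lambda_j$, so any maximizer of $f_u$ lying in the open interval $I_j$ must equal $u-\Lambda_j$. Hence
\[
\Gamma_\lambda(u)\subset\{K_1,\dots,K_N\}\cup\{u-\Lambda_j:\ j=0,\dots,N,\ u-\Lambda_j\in I_j\},
\]
and the same holds for $T_\lambda(u)=\min\Gamma_\lambda(u)$. This set is finite for every $u$.

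Next I will describe $T_\lambda$ as a function of $u$. By Lemma~\ref{lem:H-properties}(iv), $T_\lambda=H'_{\lambda,-}$ is nondecreasing. I will partition $\mathbb R$ into the Borel sets
\[
A_j:=\{u: T_\lambda(u)=u-\Lambda_j\in I_j\},\qquad B_i:=\{u:T_\lambda(u)=K_i\}.
\]
These cover $\mathbb R$ by the inclusion above. Since $T_\lambda$ is monotone, each $A_j$ and each $B_i$ is an interval, possibly empty or degenerate; I will not need their exact endpoints, only that they are intervals. On $A_j$ we have $T_\lambda(u)=u-\Lambda_j$, a pure shift, and on $B_i$ the map $T_\lambda$ is constant and equal to $K_i$.

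Now fix $x$ and write $a=a_\lambda(x)$. Then $\pi^{*,x}=(Y_x)_\#\gamma$ with $Y_x(z)=T_\lambda(a+z)$. For a Borel set $E$ not containing any $K_i$,
\[
\pi^{*,x}(E)=\sum_{j=0}^N\gamma\bigl(\{z: a+z\in A_j,\ a+z-\Lambda_j\in E\}\bigr).
\]
Each term is $\gamma$ restricted to the interval $A_j-a$ and pushed forward by the translation $z\mapsto z+a-\Lambda_j$. Translation preserves absolute continuity and $\gamma$ is absolutely continuous, so $\pi^{*,x}$ restricted to $\mathbb R\setminus\{K_1,\dots,K_N\}$ is absolutely continuous. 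It therefore has no atoms there, which gives claim (1). The same decomposition is a piecewise-shift image of $\gamma$, with shift $a_\lambda(x)-\Lambda_j$ on the piece $A_j-a_\lambda(x)$, which gives claim (2). The mass $\gamma(B_i-a)$, which may be positive, sits at $K_i$. Finally, Corollary~\ref{cor:penalized-optimality} guarantees that $\pi^*$ is indeed the optimal coupling, so the structural statement applies to the optimizer.

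The main obstacle is the localization step in the first paragraph: showing that any maximizer not at a kink must be the shifted point $u-\Lambda_j$. This comes from the first-order condition, since $f_u$ is differentiable off the kinks and its derivative on $I_j$ is $u-y-\Lambda_j$. I also need to make sure nothing forces atoms elsewhere. That could only happen if $T_\lambda$ were constant on a set of positive $\gamma$-measure at a value other than some $K_i$. This is excluded because off the kinks $T_\lambda$ equals $u-\Lambda_j$, which has slope $1$ in $u$ and so is constant on no nondegenerate interval. Measurability is not an issue, since all the sets involved are intervals.
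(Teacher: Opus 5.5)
Your proposal is correct and follows essentially the same route as the paper: both arguments localize $T_{\lambda^*}(u)$ in the finite set $\{K_1,\dots,K_N\}\cup\{u-\sum_{i\le j}\lambda^*_i:\ j=0,\dots,N\}$ and then use that $\gamma$ is absolutely continuous. The differences are only in execution. You obtain the localization from the first-order condition on each open interval $(K_j,K_{j+1})$, whereas the paper writes out explicit formulas for the pieces $H^j_\lambda$. Your decomposition into the sets $A_j$ and $B_i$ spells out the final step that the paper states in one line, and slightly strengthens it to absolute continuity off the strikes for every $x$. (The $A_j$ are intervals by monotonicity because each equals $T_{\lambda^*}^{-1}((K_j,K_{j+1}))$.)
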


\begin{proof}
For convenience, set $K_0:=-\infty$ and $K_{N+1}:=+\infty$. Then
\[
H_\lambda(u) = \max_{0\leq j \leq N} \sup_{y\in [K_j,K_{j+1}]} \left\{ uy - \frac12 y^2 - \sum_{i=1}^j \lambda_i(y-K_i) \right\} =: \max_{0\leq j \leq N} H_\lambda^j(u).
\]
For fixed $j$, the unconstrained quadratic function under the supremum attains its maximum at $u-\sum_{i=1}^j\lambda_i$. Hence, its supremum over $[K_j,K_{j+1}]$ is attained either at this point, if it belongs to $[K_j,K_{j+1}]$, or at one of the endpoints. Thus, for $j=1,\ldots,N-1$,
\[
H_\lambda^j(u) = 
    \begin{cases}
    u K_j - \frac12 K_j^2 - \displaystyle\sum_{i=1}^j \lambda_i(K_j-K_i),
    & u - \displaystyle\sum_{i=1}^j \lambda_i \leq K_j, \\[1.1em]
    
    \dfrac12 \left( u - \displaystyle\sum_{i=1}^j \lambda_i \right)^2 + \displaystyle\sum_{i=1}^j \lambda_i K_i,
    & K_j \leq u - \displaystyle\sum_{i=1}^j \lambda_i \leq K_{j+1}, \\[1.1em]
    
    u K_{j+1} - \frac12 K_{j+1}^2 - \displaystyle\sum_{i=1}^j \lambda_i (K_{j+1}-K_i), 
    & u - \displaystyle\sum_{i=1}^j \lambda_i \geq K_{j+1}.
    \end{cases}
\]
For $j\in\{0,N\}$, the corresponding formulas reduce to
\[
H_\lambda^0(u) = 
    \begin{cases}  
    \dfrac12 u^2,
    & u \leq K_{1}, \\[1.1em]
    
    u K_{1} - \frac12 K_{1}^2, 
    & \text{otherwise};
    \end{cases}
    \quad
H_\lambda^N(u) = 
    \begin{cases}
    \dfrac12 \left( u - \displaystyle\sum_{i=1}^N \lambda_i \right)^2 + \displaystyle\sum_{i=1}^N \lambda_i K_i,
    \;\  u \geq K_N + \displaystyle\sum_{i=1}^N \lambda_i, \\[1.1em]
    
    u K_N - \frac12 K_N^2 - \displaystyle\sum_{i=1}^N \lambda_i(K_N-K_i),\;
     \text{otherwise}.
    \end{cases}
\]
Thus, $H_\lambda$ is the maximum over $j=0,\ldots,N$ of the quadratic and affine functions displayed above. Hence, $H_\lambda$ is convex and has only finitely many non-differentiability points, while each smooth piece is either quadratic or affine. By Lemma~\ref{lem:H-properties},
\[
T_{\lambda^*}(u) = \min \operatorname*{argmax}_{y \in \mathbb R} \{uy - \frac12 y^2 - \Lambda_{\lambda^*}(y)\}.
\]
Therefore, the above representation of $H_\lambda$ yields
\[
T_{\lambda^*}(u) \in \{K_1, \ldots, K_N\} \cup \left\{ u - \sum_{i=1}^j \lambda^*_i, \, j = 0, \ldots, N \right\}.
\]
It follows that atoms of $\pi^{*,x}$ can occur only at strike prices $K_1,\ldots,K_N$. Away from the strike prices, $\pi^{*,x}$ is obtained from the atomless reference measure $\gamma$ by piecewise shifts of the form $a_{\lambda^*}(x)-\sum_{i=1}^j\lambda_i^*$.
\end{proof}

The following example shows that optimal dual multipliers in the problem \eqref{eq:strong-dual} may be negative.

\begin{example}\label{ex:neg-lambda}
Let $\mu=\delta_0$, $\gamma = \mathcal N(0,1)$, $N=1$, and $K_1=0$. We show that for certain values of $c$, a solution of the dual problem may be negative. Set $\lambda=-1$. Then $\Lambda_\lambda(y)=-(y)_+$ and
\[
T_{-1}(u)=H'_{-1,-}(u)=
    \begin{cases}
        u, & u\leq -\frac12,\\
        u+1, & u>-\frac12.
    \end{cases}
\]
For $a=-\frac12$, we have $\Psi'_{-1}(a)=0$. Indeed, if $Z\sim\gamma$, then
\[
Y(Z):=T_{-1}\left(Z-\frac12\right) =
    \begin{cases}
        Z-\frac12, & Z\leq 0,\\
        Z+\frac12, & Z>0,
    \end{cases}
\]
and therefore $\mathbb E Y(Z)=0$. Moreover,
\[
\mathbb E Y(Z)_+ = \mathbb E\left[\left(Z+\frac12\right)\mathbf 1_{\{Z>0\}}\right] = \frac1{\sqrt{2\pi}}+\frac14.
\]
Hence, setting  $c = \frac1{\sqrt{2\pi}} + \frac14$, the coupling $\pi_{-1}(dx, dy) = \delta_0(dx) Y_\# \gamma(dy)$ satisfies the moment constraint.

We now verify that $\pi_{-1}$ is optimal. Since $\overline C(\pi_{-1})=c$, for any $\pi\in\mathcal M_2(\delta_0)$ with $\overline C(\pi)=c$, Lemma~\ref{lem:penalized} gives
\[
J(\pi) = J(\pi) - \lambda\, \overline C(\pi) + \lambda c \leq J(\pi_{-1}) - \lambda\, \overline C(\pi_{-1}) + \lambda c = J(\pi_{-1}).
\]
Thus, $\pi_{-1}$ is optimal for the primal problem. Moreover, by Lemma~\ref{lem:penalized},
\[
-\int \Psi^*_{-1}\,d\mu = J(\pi_{-1})+\overline C(\pi_{-1}) = V(c)-\lambda c.
\]
Therefore, the dual objective at $\lambda=-1$ equals $V(c)$, so $\lambda=-1$ is a dual optimizer. Notice also that $c$ satisfies the interiority criterion of Proposition~\ref{prop:c-check}.
\end{example}

\section{Connection to  Bass martingales and the MBB problem}\label{sec:mbb}

A fundamental question in the dynamic martingale problem is the existence and uniqueness of a Bass martingale. By \cite[Theorem 1.3 and Remark D.3]{BVBST}, a pair of marginals $(\mu,\nu)$ admits a unique Bass martingale if and only if it is irreducible. We use the following characterization of irreducibility in terms of potentials \cite[Section 3.1]{BackhoffVeraguasBeiglbockHuesmannKallblad2020}.

\begin{definition}
    A pair of measures $(\mu, \nu)$ is called \textit{irreducible} if
    \[
    \{t \in \mathbb R: u_\mu(t) < u_{\nu}(t)\} = \mathbb R, 
    \]
    where $u_\rho(t) := \int |y-t|\, \rho(dy)$ is the potential of the measure $\rho$.
\end{definition}

\noindent
Throughout this section, we assume that $\gamma$ has full support, i.e. $\operatorname{supp} \gamma = \mathbb R$.

\begin{lemma}\label{lem:irreducibility}
Let $\lambda^*$ be a dual optimizer in \eqref{eq:strong-dual}, and let
\[
\pi^*(dx,dy) = \mu(dx)\, \left (T_{\lambda^*}(a_{\lambda^*}(x)+\cdot)\right)_\#\gamma(dy),
\]
be the corresponding optimal coupling, with second marginal $\nu^*$. Then the pair $(\mu,\nu^*)$ is irreducible.
\end{lemma}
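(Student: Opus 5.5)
We need to show that $u_\mu(t)<u_{\nu^*}(t)$ for every $t\in\mathbb R$. Since $\pi^*$ is a martingale coupling, conditional Jensen gives, for each $x$,
\[
\int |y-t|\,\pi^{*,x}(dy)\ \geq\ |x-t|,
\]
with equality if and only if $\pi^{*,x}$ is concentrated on one side of $t$. That is, either $\pi^{*,x}((-\infty,t))=0$ or $\pi^{*,x}((t,\infty))=0$. Integrating over $\mu$ gives $u_{\nu^*}(t)\ge u_\mu(t)$. Hence it suffices to find, for each fixed $t$, a set of $x$ of positive $\mu$-measure on which $\pi^{*,x}$ charges both $(-\infty,t)$ and $(t,\infty)$. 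In fact I would try to show this for every $x$.

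The key structural input is that $\pi^{*,x}=(Y_x)_\#\gamma$ with $Y_x(z)=T_{\lambda^*}(a_{\lambda^*}(x)+z)$. Here $T_{\lambda^*}$ is non-decreasing and, by \eqref{eq:T-growth-maximizers}, satisfies $|T_{\lambda^*}(u)-u|\le L_{\lambda^*}$. Consequently $T_{\lambda^*}(u)\to\pm\infty$ as $u\to\pm\infty$. Fix $t$ and $x$, and write $a=a_{\lambda^*}(x)$. Choose $z_+$ large enough that $a+z_+-L_{\lambda^*}>t$. Then $Y_x(z)>t$ for all $z\ge z_+$, so
\[
\pi^{*,x}((t,\infty))\ \ge\ \gamma([z_+,\infty))>0,
\]
where positivity uses $\operatorname{supp}\gamma=\mathbb R$, the standing assumption of this section. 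Symmetrically, choosing $z_-$ with $a+z_-+L_{\lambda^*}<t$ gives
\[
\pi^{*,x}((-\infty,t))\ \ge\ \gamma((-\infty,z_-])>0.
\]
Thus, for every $x$, the measure $\pi^{*,x}$ is not concentrated on one side of $t$.

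It remains to turn the non-concentration into strict inequality in Jensen. The function $y\mapsto|y-t|$ is affine on each side of $t$, so the Jensen gap can be computed explicitly:
\[
\int |y-t|\,\pi^{*,x}(dy)-|x-t| \;=\; 2\min\Bigl\{\int (y-t)_-\,\pi^{*,x}(dy),\ \int (y-t)_+\,\pi^{*,x}(dy)\Bigr\},
\]
using the mean $x$. Both terms are strictly positive by the previous step, so the gap is positive for every $x$. Integrating against $\mu$ then gives $u_{\nu^*}(t)>u_\mu(t)$ for all $t$, which is irreducibility.

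The only point needing care is the explicit gap formula; it is elementary, using $|y-t|=(y-t)_++(y-t)_-$ together with $(x-t)=\int(y-t)\,\pi^{*,x}(dy)$. No property of optimality beyond the explicit form of $\pi^*$ from Corollary~\ref{cor:penalized-optimality} and Lemma~\ref{lem:H-properties}(iii) is needed.
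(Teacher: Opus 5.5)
Your proposal is correct and follows the same route as the paper's proof: Jensen's inequality under the martingale property, followed by showing that every $\pi^{*,x}$ charges both $(-\infty,t)$ and $(t,\infty)$, using $|T_{\lambda^*}(u)-u|\le L_{\lambda^*}$ and the full support of $\gamma$. Your explicit gap identity $\int|y-t|\,\pi^{*,x}(dy)-|x-t|=2\min\{\int(y-t)_-\,d\pi^{*,x},\int(y-t)_+\,d\pi^{*,x}\}$ is a more precise version of the paper's remark that equality forces $y\mapsto|y-t|$ to be affine on a set of full $\pi^{*,x}$-measure.
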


\begin{proof}
Fix $t \in \mathbb R$. By Jensen's inequality and the martingale property,
\[
u_{\nu^*}(t) = \int |y-t|\, \pi^{*,x}(dy) \mu(dx) \geq \int \left | \int y\, \pi^{*, x}(dy) - t \right | \mu(dx) = \int |x-t|
\,\mu(dx).
\]
Equality can hold only if $y\mapsto |y-t|$ is affine on a set of full $\pi^{*,x}$-measure for $\mu$-a.e. $x$. Thus, in order to obtain the strict inequality, we show that for every $t, x \in \mathbb R$,
\[
\pi^{*, x}((-\infty, t)) > 0, \qquad \pi^{*, x}((t, \infty)) > 0.
\]
Fix $x \in \mathbb R$. By \eqref{eq:T-growth-maximizers},
\[
a_{\lambda^*}(x) + z -L_{\lambda^*} \leq T_{\lambda^*}(a_{\lambda^*}(x) + z) \leq L_{\lambda^*} + a_{\lambda^*}(x) + z, \qquad z\in\mathbb R.
\]
So, if $z < t-L_{\lambda^*}-a_{\lambda^*}(x)$, then
\[
T_{\lambda^*}(a_{\lambda^*}(x)+z) \leq a_{\lambda^*}(x)+z+L_{\lambda^*} < t.
\]
Since $\gamma$ has full support,
\[
\pi^{*, x}((-\infty, t)) \geq \gamma\left((-\infty,t-L_{\lambda^*}-a_{\lambda^*}(x))\right)>0.
\]
Similarly, if $z>t+L_{\lambda^*}-a_{\lambda^*}(x)$, then
\[
T_{\lambda^*}(a_{\lambda^*}(x)+z) \geq a_{\lambda^*}(x)+z-L_{\lambda^*}>t,
\]
and hence,
\[
\pi^{*, x}((t, \infty)) \geq \gamma\left((t+L_{\lambda^*}-a_{\lambda^*}(x), \infty)\right)>0.
\]
Thus, $u_{\nu^*}(t) > u_\mu(t)$ for every $t\in\mathbb R$, so $(\mu, \nu^*)$ is irreducible.
\end{proof}

We now relate the optimal coupling constructed in Corollary~\ref{cor:plan-structure} to the MBB problem, or equivalently, to the weak martingale transport problem \eqref{eq:wmot}. Throughout the rest of this section, let $\gamma=\mathcal N(0,1)$.

\begin{theorem}
Let $\pi^*$ be the optimal coupling from Corollary~\ref{cor:plan-structure}, and let $\lambda^*$ be the corresponding dual optimizer in
\eqref{eq:strong-dual}. Define
\[
f_{\lambda^*}(y) := \frac{1}{2} y^2 +  \Lambda_{\lambda^*}(y) \in L^1(\nu^*),
\]
where $\nu^*$ is the second marginal of $\pi^*$. Then:
\begin{enumerate}
    \item $\pi^*$ is the law of $(M_0,M_1)$ for the Bass martingale 
    \[
    M_t = \mathbb E\!\left[v'(B_1)\mid B_t\right], \qquad 0\leq t\leq 1,
    \]
    where $v = H_{\lambda^*}$, $v':= v'_- = T_{\lambda^*}$, $B_t:=\xi+W_t$, $\xi \sim (a_{\lambda^*})_\# \mu$, and $(W_t)_{0\leq t\leq 1}$ is a standard Brownian motion independent of $\xi$;
    
    \item $\pi^*$ solves the weak martingale transport problem \eqref{eq:wmot} for its marginals $(\mu, \nu^*)$;
    
    \item The convex envelope $f^{**}_{\lambda^*}$ of $f_{\lambda^*}$ solves the dual problem of \cite{BVBST} for the pair of marginals $(\mu, \nu^*)$:
    \begin{equation}\label{eq:BVBST-dual}
    \inf_{\substack{
    \psi\in L^1(\nu^*),\\ \psi \text{ convex}}} \left \{\int \psi\, d\nu^* - \int (\psi^* * \gamma)^*(x) d\mu \right \}.
    \end{equation}
    Thus, $\lambda^*$ determines a solution of the dual problem with prescribed marginals.
\end{enumerate}
\end{theorem}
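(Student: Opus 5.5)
The three claims all follow from the explicit construction $\pi^*=\pi_{\lambda^*}$ in Lemma~\ref{lem:penalized} and Corollary~\ref{cor:penalized-optimality}. The key identities are $\Psi_{\lambda^*}=H_{\lambda^*}*\gamma$, $\Psi_{\lambda^*}'(a_{\lambda^*}(x))=x$, and $H_{\lambda^*}=f_{\lambda^*}^*$; the last holds directly from the definitions, since
\[
f_{\lambda^*}^*(u)=\sup_y\{uy-\tfrac12y^2-\Lambda_{\lambda^*}(y)\}=H_{\lambda^*}(u).
\]
Throughout, write $\lambda=\lambda^*$.

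\textbf{Part 1 (Bass representation).} Take $X\sim\mu$ and an independent Brownian motion $W$, and realize $\xi:=a_\lambda(X)$, so that $\xi\sim(a_\lambda)_\#\mu$. Set $B_t=\xi+W_t$ and $M_t=\mathbb E[T_\lambda(B_1)\mid B_t]$.

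Since $v=H_\lambda$ is convex, $v'=T_\lambda$ is non-decreasing. By \eqref{eq:T-growth} it has linear growth, so $T_\lambda(B_1)\in L^2$. Hence $M$ is a square-integrable martingale of Bass type.

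Given $B_0=\xi$, the increment $W_1$ is distributed as $\gamma$. Therefore, by \eqref{eq:Psi-derivative} and \eqref{eq:right-inverse},
\[
M_0=\int T_\lambda(\xi+z)\,\gamma(dz)=\Psi_\lambda'(a_\lambda(X))=X .
\]
Conditionally on $X=x$, the terminal value $M_1=T_\lambda(a_\lambda(x)+W_1)$ has law $(T_\lambda(a_\lambda(x)+\cdot))_\#\gamma=\pi^{*,x}$. Hence $\operatorname{Law}(M_0,M_1)=\pi^*$.

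A subtle point is that $\sigma(B_0)$ may be smaller than $\sigma(X)$, since $a_\lambda$ need not be injective. This causes no problem: $M_0$ is computed from $B_0$ alone and equals $X$, so $X$ is in fact $\sigma(B_0)$-measurable.

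\textbf{Part 2 (optimality in \eqref{eq:wmot}).} Let $\pi\in\Pi_{\mathcal M}(\mu,\nu^*)$. Then $\pi\in\mathcal M_2(\mu)$, and $\overline C(\pi)=\overline C(\pi^*)=c$ because the call moments depend only on the second marginal. By Lemma~\ref{lem:penalized} and Corollary~\ref{cor:penalized-optimality},
\[
J(\pi)-\lambda\cdot c\le -\int\Psi_\lambda^*\,d\mu=J(\pi^*)-\lambda\cdot c .
\]
Thus $J(\pi)\le J(\pi^*)$. By Lemma~\ref{lem:square} this is exactly $\int W_2^2(\pi^{*,x},\gamma)\,d\mu\le\int W_2^2(\pi^x,\gamma)\,d\mu$, which is the claimed optimality of $\pi^*$.

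\textbf{Part 3 (dual of \cite{BVBST}).} Let $\psi:=f_\lambda^{**}$. It is convex and satisfies $\psi^*=f_\lambda^*=H_\lambda$, so $\psi^**\gamma=\Psi_\lambda$ and $(\psi^**\gamma)^*=\Psi_\lambda^*$.

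Next, $\psi=f_\lambda$ holds $\nu^*$-a.e. Indeed, $\nu^*$ is carried by the values $y=T_\lambda(u)\in\Gamma_\lambda(u)$. For such $y$,
\[
f_\lambda(y)=uy-H_\lambda(u)\le f_\lambda^{**}(y)\le f_\lambda(y).
\]
In particular $\psi\in L^1(\nu^*)$, since $f_\lambda$ has quadratic growth and $\nu^*\in\mathcal P_2$.

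The dual objective at $\psi$ then equals
\[
\int\Bigl(\tfrac12y^2+\Lambda_\lambda\Bigr)d\nu^*-\int\Psi^*_\lambda\,d\mu
=\tfrac12\int y^2\,d\nu^*+\lambda\cdot c-\int\Psi_\lambda^*\,d\mu .
\]
By \eqref{eq:strong-dual} and Corollary~\ref{cor:penalized-optimality}, this equals $\tfrac12\int y^2\,d\nu^*+J(\pi^*)=\int\operatorname{MC}(\pi^{*,x},\gamma)\,\mu(dx)$.

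Weak duality between the primal $\sup_{\pi\in\Pi_{\mathcal M}(\mu,\nu^*)}\int\operatorname{MC}(\pi^x,\gamma)\,d\mu$ and \eqref{eq:BVBST-dual} gives the reverse inequality for every admissible $\psi$. We can re-derive it exactly as in the first half of Lemma~\ref{lem:penalized}, with $\Lambda_\lambda$ replaced by a general convex $\psi$. Hence $\psi$ attains the infimum in \eqref{eq:BVBST-dual}, and by Part 2, $\pi^*$ attains the primal.

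\textbf{Main obstacle.} I expect the delicate step to be Part 3. One must match the precise form and sign conventions of the \cite{BVBST} dual, including the exact admissible class and the integrability of $(\psi^**\gamma)^*$. One must also justify $f^{**}_\lambda=f_\lambda$ $\nu^*$-a.e., which relies on the fact that $\nu^*$ charges only maximizer points $T_\lambda(u)$.
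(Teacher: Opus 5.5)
Your proof is correct. Parts 1 and 3 follow the paper closely: the representation $M_0=\Psi_{\lambda^*}'(\xi)$, $M_1=T_{\lambda^*}(\xi+W_1)$, and the identity $f^{**}_{\lambda^*}=f_{\lambda^*}$ $\nu^*$-a.e. via $T_{\lambda^*}(u)\in\Gamma_{\lambda^*}(u)$ are exactly the paper's steps. Your Part 2, however, takes a genuinely different and more elementary route.

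The paper proves optimality of $\pi^*$ in \eqref{eq:wmot} in two steps. It first establishes irreducibility of $(\mu,\nu^*)$ (Lemma~\ref{lem:irreducibility}, which needs $\operatorname{supp}\gamma=\mathbb R$). It then invokes \cite[Theorem 1.3]{BVBST}, which identifies Bass martingales with optimizers for irreducible pairs. You instead observe that
\[
\Pi_{\mathcal M}(\mu,\nu^*)\subset\{\pi\in\mathcal M_2(\mu):\overline C(\pi)=c\},
\]
because call moments depend only on the second marginal. Optimality of $\pi^*$ for the relaxed, moment-constrained problem therefore transfers immediately to the fixed-marginal problem.

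Your argument is self-contained and uses neither irreducibility nor Gaussianity of $\gamma$. It even works for $\pi_\lambda$ with arbitrary $\lambda$, using Lemma~\ref{lem:penalized} alone. What the paper's route buys in exchange is the structural fact that $(\mu,\nu^*)$ is irreducible, so the Bass martingale of Part 1 is the unique one for these marginals.

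Part 3 has a smaller difference. The paper appeals to no-duality-gap theorems for the fixed-marginal problem. You correctly note that weak duality suffices, via a Fenchel--Young computation as in the first half of Lemma~\ref{lem:penalized}, since the dual value at $f^{**}_{\lambda^*}$ equals a primal value. That chain of inequalities in fact re-proves Part 2.

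One small inaccuracy: $a_{\lambda^*}$ is injective, since $\Psi'_{\lambda^*}\circ a_{\lambda^*}=\mathrm{id}$. So the ``subtle point'' in Part 1 does not actually arise, although your resolution, $X=\Psi'_{\lambda^*}(B_0)$, is precisely the reason why.
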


\begin{proof}
Since $B_0=\xi$ and $W_1$ is independent of $\xi$ with law $\gamma$,
\[
M_0 = \mathbb E\!\left[v'(\xi+W_1)\mid\xi\right] = \int v'(\xi+z)\,\gamma(dz) = \Psi'_{\lambda^*}(\xi).
\]
Since $\xi\sim(a_{\lambda^*})_\#\mu$ and $\Psi'_{\lambda^*}(a_{\lambda^*}(x))=x$ for all $x$, it follows that $M_0\sim\mu$. Thus, $(M_0,M_1) = (\Psi'_{\lambda^*}(\xi), v'(\xi + W_1))$. Its conditional law given $M_0=x$ is therefore
\[
\bigl(v'(a_{\lambda^*}(x)+\cdot)\bigr)_\#\gamma
=
\rho_{\lambda^*}^x.
\]
Hence, $(M_0,M_1)\sim\pi^*$.

By Lemma~\ref{lem:irreducibility}, the pair $(\mu,\nu^*)$ is irreducible.
Since a Bass martingale between these marginals has just been constructed,
the second assertion follows from \cite[Theorem 1.3]{BVBST}.

To prove the last assertion, we first show that
\[
f^{**}_{\lambda^*} = f_{\lambda^*} \qquad \nu^*\text{-a.e.}
\]
By properties of the Legendre transform, $(f_{\lambda^*}^{**})^* = f_{\lambda^*}^* = H_{\lambda^*}$. Since $f_{\lambda^*}^{**} \leq f_{\lambda^*}$, and for every $u \in \mathbb R$,  $T_{\lambda^*}(u) \in \Gamma_{\lambda^*}(u)$, we have
\[
H_{\lambda^*}(u) = u\, T_{\lambda^*}(u) - f_{\lambda^*}(T_{\lambda^*}(u)) \leq u\, T_{\lambda^*}(u) - f_{\lambda^*}^{**}(T_{\lambda^*}(u)) \leq \left(f_{\lambda^*}^{**}\right)^* (u) = H_{\lambda^*}(u).
\]
Hence, $f_{\lambda^*}^{**}(T_{\lambda^*}(u)) = f_{\lambda^*}(T_{\lambda^*}(u))$. Taking $u=\xi+W_1$ and using the representation of $M_1$ above, we conclude that $f_{\lambda^*}$ coincides with its convex envelope $\nu^*$-a.e.

Since $\nu^*$ satisfies the moment constraints and
$f_{\lambda^*}^{**}=f_{\lambda^*}$ $\nu^*$-a.e.,
\[
\int f_{\lambda^*}^{**}\,d\nu^* = \int f_{\lambda^*}\,d\nu^* = \frac12\int y^2\,\nu^*(dy)+\lambda^*\cdot c.
\]
Moreover, $(f_{\lambda^*}^{**})^* = H_{\lambda^*}$. Therefore, the value of the dual objective in \eqref{eq:BVBST-dual} at
$\psi=f_{\lambda^*}^{**}\in L^1(\nu^*)$ is
\[
\lambda^*\cdot c + \frac{1}{2} \int y^2 \,d\nu^* - \int\Psi_{\lambda^*}^*\,d\mu = V(c) + \frac{1}{2} \int y^2 \,d\nu^*.
\]
Since $J(\pi^*)=V(c)$, and $\pi^*$ solves \eqref{eq:wmot},
\[
V(c) + \frac{1}{2} \int y^2 \,d\nu^* = \int \operatorname{MC}(\pi^{*,x}, \gamma)\, \mu(dx) = \max_{\pi \in \Pi_\mathcal M(\mu, \nu^*)} \int \operatorname{MC}(\pi^{x}, \gamma)\, \mu(dx).
\]
By \cite[Theorem 2.3]{BackhoffVeraguasBeiglbockHuesmannKallblad2020} and \cite[Theorem 1.4]{BVBST}, there is no duality gap for the martingale transport problem with prescribed marginals $\mu$ and $\nu^*$. Hence, $f_{\lambda^*}^{**}$ is a dual optimizer in \eqref{eq:BVBST-dual}.
\end{proof}

\begin{example}\label{ex:concave-equiv}
In Example \ref{ex:neg-lambda}, for $\lambda^*=-1$ and $K_1=0$, we have $ f_{\lambda^*}(y)=\frac12y^2-y_+$, which is not convex. Its convex envelope is obtained by replacing the graph on the interval $\left[-\frac12,\frac12\right]$ by the common tangent to the two quadratic branches at $-\frac12$ and $\frac12$. Therefore,
\[
f_{\lambda^*}^{**}(y)=f_{\lambda^*}(y), \qquad y\notin\left(-\frac12,\frac12\right).
\]
On the other hand, the explicit form of the optimal random variable $Y$ from Example~\ref{ex:neg-lambda} gives
\[
\nu^*\left(\left(-\frac12,\frac12\right)\right)=0.
\]
Consequently, although $f_{\lambda^*}$ is not convex, its convex envelope $f_{\lambda^*}^{**}$ is a dual optimizer in \eqref{eq:BVBST-dual}, and passing to the convex envelope does not change the value of the dual objective.
\end{example}

\printbibliography

\end{document}